\allowdisplaybreaks \numberwithin{equation}{section}
\numberwithin{equation}{section}
\newtheorem{theorem}{Theorem}[section]
\newtheorem{proposition}[theorem]{Proposition}
\newtheorem{lemma}[theorem]{Lemma}
\newtheorem*{theoremA}{Theorem A}
\theoremstyle{definition}
\theoremstyle{remark}
\newtheorem{remark}[theorem]{Remark}
\begin{document}

\title
[Quantitative stability of a class of explicit steady Euler flows in a disk]{Quantitative stability of a class of explicit steady Euler flows in a disk}

\author{Fatao Wang}
\address{School of Mathematical Sciences, Dalian University of Technology, Dalian 116024, PR China}
\email{wangfatao@mail.dlut.edu.cn}

\author{Guodong Wang}
\address{School of Mathematical Sciences, Dalian University of Technology, Dalian 116024, PR China}
\email{gdw@dlut.edu.cn}

%\thanks{}

\begin{abstract}
We provide a short proof of the $L^2$-orbital stability of a class of explicit steady Euler flows in a disk by establishing a quantitative estimate. The main idea is to exploit the conserved quantities of the Euler equation, including the kinetic energy, the enstrophy, and the moment of fluid impulse. Our result seems to suggest that more radial symmetry leads to stronger instability. 
\end{abstract}

\maketitle
\tableofcontents
\section{Introduction and main result}\label{sec1}

Let $D\subset\mathbb R^2$ be the unit disk centered at the origin.
Consider the evolution of an ideal fluid of unit density in $D$  with impermeability condition on $\partial D$, governed by the following two-dimensional Euler equation (vorticity form):
\begin{equation}\label{vor}
\begin{cases}
\partial_t\omega+\mathbf v\cdot\nabla\omega=0, & t\in\mathbb{R},\ \mathbf x=(x_1,x_2)\in D,\\
\mathbf v=\nabla^\perp\mathcal{G}\omega,\\
\omega(0,\cdot)=\omega_0,
\end{cases}
\end{equation}
where $\mathbf v(t,\mathbf x)$ is the velocity of the fluid, $\omega(t,\mathbf x)$ is the vorticity, $\nabla^\perp:=(\partial_{x_2},-\partial_{x_1}),$ and $\mathcal G$ is the inverse of $-\Delta$ subject to the zero Dirichlet boundary condition.  It is well known that the above Euler equation is globally well-posed for initial vorticity belonging to various function spaces; see \cite{BV,MB02,MP94}. In particular, if $\omega_0$ is smooth, then there exists a unique global smooth solution to \eqref{vor}.

For any smooth solution $\omega$ of the Euler equation \eqref{vor},
 the following three  conservation laws hold (cf. \cite{BV,MB02,MP94}):
 \begin{itemize}
 \item [(C1)] For any $t\in\mathbb R$,
 \[E(\omega(t,\cdot))=E(\omega(0,\cdot)),\]
 where 
     \begin{equation}\label{defofe}
     E(\omega ):=\frac{1}{2}\int_D\omega \mathcal G\omega \dd{\mathbf x}
     \end{equation}
 represents the \textit{kinetic energy} of the fluid;
 \item [(C2)] For any $s\in\mathbb R$ and any $t\in\mathbb R$, 
 \[\left|\left\{\mathbf x\in D\;\middle|\;\omega(t,\mathbf x)>s\right\}\right|=\left|\left\{\mathbf x\in D\;\middle|\;\omega(0,\mathbf x)>s\right\}\right|,\]
  where $|\cdot|$ denotes the two-dimensional Lebesgue measure;
    \item [(C3)] For any $t\in\mathbb R$,
 \[I(\omega(t,\cdot))=I(\omega(0,\cdot)),\]
 where 
      \begin{equation}\label{defofi}
      I(\omega):=\int_D\left|\mathbf x\right|^2\omega \dd{\mathbf x}
      \end{equation}
represents the \textit{moment of fluid impulse}.
 \end{itemize}
As a consequence of (C2),   the \emph{enstrophy} $J$ of the fluid, defined by
\begin{equation}\label{defofj}
J(\omega)=\int_D\omega^2 \dd{\mathbf x},
\end{equation}
is conserved under the Euler dynamics.
These three conservation laws play an essential role in this paper.

We will be focusing on a class of explicit steady solutions. Denote by $J_n$ the Bessel function of the first kind of order $n$, 
$$J_n(s):=\sum_{i=0}^{\infty}  \frac{(-1)^i}{i!(n+i)!}\left(\frac{s}{2}\right)^{n+2i},\quad s\in\mathbb R,\,n=0,1,2,\cdots,$$
and by $j_{n,k}$ the $k$-th positive zero of $J_n$.
For convenience, denote 
\[\mathsf j:=j_{1,1}\approx 3.831706.\]
Define
$$\mathbf V:=\operatorname{span}\left\{J_0(\mathsf jr),J_1(\mathsf jr)\cos\theta,J_1(\mathsf jr)\sin \theta \right\},$$
where $(r,\theta)$ denotes the polar coordinates in $\mathbb R^2$. 
According to \cite[Lemma 1.1]{W25},   any $\bar\omega\in\mathbf V$ is a steady solution to \eqref{vor}.

In \cite{W25}, the author showed that any steady flow with vorticity in $\mathbf V$  is orbitally stable up to rigid rotation. 
Throughout this paper, let $\mathbf O$ be a rotational orbit within $\mathbf V$, i.e.,
\begin{equation}\label{defor}
	\mathbf O:=\left\{AJ_0(\mathsf jr)+ BJ_1(\mathsf jr)\cos(\theta+\alpha) \;\middle|\;\alpha\in\mathbb R\right\}, 
\end{equation}
where $A,B\in\mathbb R$ are fixed. Without loss of generality, we may assume that $B\geq 0.$ For any solution $\omega(t,\mathbf x)$ to the Euler equation \eqref{vor}, we write $\omega_t:=\omega(t,\cdot)$ for simplicity. For $v\in L^p(D)$ and $K\subset L^p(D)$, we use $\mathrm{dist}_p(v,K)$ to denote the $L^p$-distance
between $v$ and $K$, i.e.,
\[\mathrm{dist}_p(v,K):=\inf_{w\in K}\|v-w\|_{L^p(D)}.\]
Then, the stability result in \cite{W25} can be stated as follows.
\begin{theoremA}[\cite{W25}]
Let $\mathbf{O}$ be given by \eqref{defor}, where $A, B \in \mathbb{R}$ and $B \geq 0$. Then, for any $1 < p < \infty$ and $\epsilon > 0$, there exists $\delta > 0$ such that, for any smooth solution $\omega(t, \mathbf{x})$ to \eqref{vor},  it holds that
\[
\mathrm{dist}_p(\omega_0, \mathbf{O}) < \delta 
\quad \Longrightarrow \quad 
\mathrm{dist}_p(\omega_t, \mathbf{O}) < \epsilon,
\ \forall\, t \in \mathbb{R}.
\]

\end{theoremA}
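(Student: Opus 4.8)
The plan is to reduce Theorem~A to a single \emph{quantitative} $L^2$ coercivity estimate near the orbit $\mathbf O$, and then to promote it to every exponent $1<p<\infty$ using the conserved distribution function. Write $\bar\omega_\alpha:=AJ_0(\mathsf j r)+BJ_1(\mathsf j r)\cos(\theta+\alpha)$ for a generic element of $\mathbf O$, $\bar\psi_\alpha:=\mathcal G\bar\omega_\alpha$, and $c:=AJ_0(\mathsf j)$. The structural input, underlying \cite[Lemma~1.1]{W25}, is that every element of $\mathbf V$ is a Helmholtz eigenfunction, $-\Delta\bar\omega_\alpha=\mathsf j^2\bar\omega_\alpha$ in $D$; solving the Dirichlet problem for the stream function then gives the affine stream--vorticity relation
\begin{equation}\label{svrel}
\bar\omega_\alpha=\mathsf j^2\bar\psi_\alpha+c,\qquad\text{for every }\alpha\in\mathbb R .
\end{equation}
All of $J(\bar\omega_\alpha)$, $E(\bar\omega_\alpha)$, $I(\bar\omega_\alpha)$ and $\int_D\bar\omega_\alpha\dd{\mathbf x}$ are independent of $\alpha$, so the conserved quantities do not distinguish points of $\mathbf O$.

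First I would isolate the right combination of conserved quantities. Set $Q(\xi):=\tfrac12\|\xi\|_{L^2(D)}^2-\mathsf j^2E(\xi)$ and
\[
\mathcal D(\omega):=\bigl(J(\omega)-J(\bar\omega_0)\bigr)-2\mathsf j^2\bigl(E(\omega)-E(\bar\omega_0)\bigr)-2c\Bigl({\textstyle\int_D\omega\dd{\mathbf x}-\int_D\bar\omega_0\dd{\mathbf x}}\Bigr).
\]
Using \eqref{svrel} one checks that all terms linear in $\xi:=\omega-\bar\omega_\alpha$ cancel, leaving the $\alpha$-independent identity $\mathcal D(\omega)=2\,Q(\omega-\bar\omega_\alpha)$ for every $\alpha$. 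Since $E$ is conserved by (C1) and both $J$ and $\int_D\omega$ are conserved by (C2), $\mathcal D$ is a constant of the motion, and $|\mathcal D(\omega_0)|\le C\,\mathrm{dist}_2(\omega_0,\mathbf O)^2$ for data near $\mathbf O$; hence $Q(\omega_t-\bar\omega_{\alpha})=\tfrac12\mathcal D(\omega_0)$ is small for all $t$ and all $\alpha$. Expanding $\xi$ in an $L^2$-orthonormal basis of Dirichlet eigenfunctions $J_n(j_{n,k}r)\{\cos,\sin\}(n\theta)$ gives
\[
Q(\xi)=\tfrac12\sum_{n,k}\Bigl(1-\frac{\mathsf j^2}{j_{n,k}^2}\Bigr)|\hat\xi_{n,k}|^2 .
\]
Because $\mathsf j=j_{1,1}$ and $j_{0,1}<j_{1,1}<j_{n,k}$ for all remaining $(n,k)$, this form is positive with a uniform spectral gap \emph{except} that it vanishes on the two-dimensional angular mode $(1,1)$ and is negative only on the radial ground mode $J_0(j_{0,1}r)$. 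The $(1,1)$-plane is exactly the tangent plane to $\mathbf O$: choosing $\alpha=\alpha(t)$ to realise $\mathrm{dist}_2(\omega_t,\mathbf O)$ forces $\xi_t:=\omega_t-\bar\omega_{\alpha(t)}\perp J_1(\mathsf j r)\sin(\theta+\alpha(t))$, so only two $Q$-degenerate amplitudes survive in $\xi_t$, namely the coefficient $a_t$ of $J_0(j_{0,1}r)$ and the residual angular amplitude $b_t$.

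The core of the argument is to control these two amplitudes, which $Q$ cannot see. For the radial mode I would use the impulse: since $J_0(j_{0,1}r)$ lies in the negative cone of $Q$ while $I\bigl(J_0(j_{0,1}r)\bigr)=\int_D|\mathbf x|^2J_0(j_{0,1}r)\dd{\mathbf x}\neq0$, conservation of $I$ (C3) pins $a_t$ through $|a_t|\le C\bigl(|I(\omega_0)-I(\bar\omega_0)|+\|\xi_t^{\mathrm{good}}\|_{L^2(D)}\bigr)$, where $\xi_t^{\mathrm{good}}$ denotes the component on which $Q$ is coercive. For the angular amplitude $b_t$, which neither $Q$ nor $I$ detects, I would use the energy a second, independent time: expanding the conserved identity $E(\omega_t)=E(\omega_0)$ about $\bar\omega_{\alpha(t)}$ by means of \eqref{svrel} produces, after completing the square, a relation of the form $(b_t+B)^2=B^2+R_t$ with $R_t$ controlled by $\mathcal D(\omega_0)$, $a_t$ and $\|\xi_t^{\mathrm{good}}\|_{L^2(D)}$. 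Since the angular projection of $\omega_t$ has nonnegative modulus one has $b_t+B\ge0$, whence $b_t+B=\sqrt{B^2+R_t}$ and $|b_t|\le C|R_t|/B$ \emph{provided} $B>0$. Feeding these bounds, together with $Q(\xi_t)=\tfrac12\mathcal D(\omega_0)$ and the coercivity of $Q$ on $\xi_t^{\mathrm{good}}$, into $\|\xi_t\|_{L^2(D)}^2=a_t^2+b_t^2+\|\xi_t^{\mathrm{good}}\|_{L^2(D)}^2$ and closing the resulting mild feedback by a standard continuity argument in $t$ yields the quantitative estimate $\mathrm{dist}_2(\omega_t,\mathbf O)\le C_{A,B}\,\mathrm{dist}_2(\omega_0,\mathbf O)$ for all $t$.

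Finally, I would pass from $p=2$ to general $1<p<\infty$ using (C2): the solution preserves the distribution function of $\omega_0$, so $\|\omega_t\|_{L^q(D)}=\|\omega_0\|_{L^q(D)}$ for every $q$ and in particular $\|\omega_t\|_{L^\infty(D)}$ stays bounded; interpolating $\|\xi_t\|_{L^p(D)}$ between the $L^2$ bound above and this uniform $L^\infty$ bound (for $p>2$) or applying H\"older's inequality (for $p<2$) gives smallness of $\mathrm{dist}_p(\omega_t,\mathbf O)$, and the same device transfers the hypothesis $\mathrm{dist}_p(\omega_0,\mathbf O)<\delta$ into smallness of the conserved deficits. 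I expect the main obstacle to be exactly the two directions that the quadratic invariants fail to control simultaneously: $Q$ is negative on the radial ground mode and identically zero on the angular plane, so neither the energy--enstrophy combination nor the impulse alone suffices, and one is forced to combine the impulse (for the radial mode) with a second, independent use of the energy (for the angular amplitude). It is precisely this last step that degenerates as $B\to0$ — when $B=0$ the relation above only yields $b_t^2=R_t$, hence $\mathrm{dist}_2(\omega_t,\mathbf O)\lesssim\mathrm{dist}_2(\omega_0,\mathbf O)^{1/2}$ — which is the quantitative form of the heuristic that more radial symmetry produces weaker stability.
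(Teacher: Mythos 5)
Your starting identity is correct and attractive: with $c=AJ_0(\mathsf j)$ one indeed has $\bar\omega_\alpha=\mathsf j^2\mathcal G\bar\omega_\alpha+c$ (harmonicity of the difference plus $J_1(\mathsf j)=0$), so the linear terms in $\mathcal D$ cancel and $\mathcal D(\omega)=2Q(\omega-\bar\omega_\alpha)$ for every $\alpha$; this is an ``expand about the orbit'' version of the paper's Proposition~\ref{prop31}, and your treatment of the angular amplitude and of the $B\to0$ degeneration matches Lemmas~\ref{lema402}--\ref{lema404}. The genuine gap is in how you handle the negative direction of $Q$. Because you diagonalize $Q$ in the \emph{Dirichlet} eigenbasis of the full space $L^2(D)$, the mode $J_0(j_{0,1}r)$ carries a strictly negative coefficient $-\mu$, $\mu=\tfrac12\bigl(\mathsf j^2/j_{0,1}^2-1\bigr)\approx 0.77$, and your whole argument rests on pinning its amplitude $a_t$ by conservation of $I$. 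That control is quantitatively too weak to close. The weight $|\mathbf x|^2$ has only about $20\%$ of its $L^2$ mass on the normalized ground mode ($\langle |\mathbf x|^2,\phi_{0,1}\rangle\approx 0.45$ against $\||\mathbf x|^2\|_{L^2(D)}\approx 1.02$), so the best bound available from $I$ is $|a_t|\le 2.2\,|I(\xi_t)|+2.1\,\|\xi_t^{\mathrm{good}}\|_{L^2(D)}$. Feeding this into the coercivity step $\kappa\|\xi_t^{\mathrm{good}}\|^2_{L^2(D)}\le \tfrac12|\mathcal D(\omega_0)|+\mu a_t^2$, with $\kappa=\tfrac12\bigl(1-\mathsf j^2/j_{2,1}^2\bigr)\approx 0.22$, puts a coefficient larger than $3$ in front of $\|\xi_t^{\mathrm{good}}\|^2$ on the right-hand side. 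Both sides are homogeneous of degree two in $(\mathrm{data},\xi)$, so the inequality is simply vacuous, and no continuity argument in $t$ can repair it: continuity closes superlinear feedback, not a linear feedback with constant exceeding $1$. This is precisely the difficulty the paper's proof is built to avoid: it first reduces to mean-zero vorticity by the rotation trick $w_t=\omega_t\circ\mathsf R_{\Omega t}+2\Omega$ (Section~\ref{sec4}, via \cite[Lemma 6.1]{W25}), and then invokes the \emph{constrained} eigenvalue problem \eqref{lep} of Greco--Lucia, for which $\mathbf V$ is the \emph{first} eigenspace (Lemma~\ref{lema22}); under the mean-zero constraint the energy--Casimir form has no negative directions at all (Lemma~\ref{lema21}, Proposition~\ref{prop31}), and the impulse is used only afterwards, to control the coefficient along $J_0(\mathsf j r)$ \emph{inside} $\mathbf V$ (Lemma~\ref{lema403}), never a negative mode. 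The fact that the mean-zero constraint kills the negative direction is exactly the cited spectral theorem and is invisible to mode-by-mode accounting in the Dirichlet basis. (Your scheme could in principle be salvaged by replacing $I$ with the conserved functional $\int_D(\beta+\gamma|\mathbf x|^2)\,\omega\,\dd{\mathbf x}$ for optimized $\beta,\gamma$, since a quadratic polynomial in $r$ can be made nearly parallel to $J_0(j_{0,1}r)$; but that computation is absent, and $I$ alone provably fails.)

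The passage from $L^2$ to general $1<p<\infty$ is also not viable as written, and this is not a technicality: it is the reason Theorem~\ref{thmmain} is stated only for $p=2$, while Theorem A for general $p$ is quoted from \cite{W25}, where it is proved by a compactness argument rather than by upgrading an $L^2$ estimate. For $p>2$, your interpolation requires a uniform bound on $\|\omega_t\|_{L^\infty(D)}$; conservation gives $\|\omega_t\|_{L^\infty(D)}=\|\omega_0\|_{L^\infty(D)}$, but nothing bounds $\|\omega_0\|_{L^\infty(D)}$ in terms of the hypothesis $\mathrm{dist}_p(\omega_0,\mathbf O)<\delta$ --- the admissible class contains smooth data of arbitrarily large sup-norm. (This particular point can be repaired by a truncation argument exploiting equimeasurability, but that is not what is written.) For $1<p<2$ the plan collapses at the very first step: on the bounded domain $D$, H\"older's inequality gives $\|\cdot\|_{L^p}\le C\|\cdot\|_{L^2}$, which is the wrong direction, so $\mathrm{dist}_p(\omega_0,\mathbf O)<\delta$ gives no control on $\mathrm{dist}_2(\omega_0,\mathbf O)$, nor on the enstrophy, energy, or impulse deficits on which your entire mechanism rests; a perturbation of tiny $L^p$ norm with $p<2$ can carry arbitrarily large enstrophy. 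Hence the proposal cannot yield Theorem A for any $p<2$.
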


The main result of this paper is to improve Theorem A in the case $p = 2$ by specifying the dependence of $\delta$ on $\epsilon$, as stated precisely below. 

\begin{theorem}\label{thmmain}
Let $\mathbf{O}$ be given by \eqref{defor}, where $A, B \in \mathbb{R}$ and $B \geq 0$. Then:
\begin{itemize}
    \item[(i)] If $B \neq 0$, there exists a generic constant $C > 0$ such that, for any $\varepsilon > 0$ and any smooth solution $\omega(t, \mathbf{x})$ of the Euler equation \eqref{vor}, it holds that
    \begin{equation}\label{thmi}
    \mathrm{dist}_2(\omega_0, \mathbf{O}) \leq \varepsilon 
    \  \Longrightarrow  \ 
    \mathrm{dist}_2(\omega_t, \mathbf{O}) \leq C B^{-1} \left(A^2 + B^2\right)^{1/2} \varepsilon + C B^{-1} \varepsilon^2,
    \ \forall\, t \in \mathbb{R};
    \end{equation}
    
    \item[(ii)] If $B = 0$, there exists a generic constant $C > 0$ such that, for any $\varepsilon > 0$ and any smooth solution $\omega(t, \mathbf{x})$ of the Euler equation \eqref{vor}, it holds that
    \begin{equation}\label{thmii}
     \mathrm{dist}_2(\omega_0, \mathbf{O}) \leq \varepsilon 
    \quad \Longrightarrow \quad 
     \mathrm{dist}_2(\omega_t, \mathbf{O}) \leq C |A|^{1/2} \varepsilon^{1/2} + C \varepsilon,
    \  \forall\, t \in \mathbb{R}.
    \end{equation}
\end{itemize}
\end{theorem}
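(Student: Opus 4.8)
The plan is to manufacture one conserved quadratic functional out of the energy and the enstrophy, show that the whole orbit sits on its zero level set, and then recover the distance to $\mathbf O$ by taming the finitely many directions on which this functional is not coercive, using the remaining linear invariants. Concretely I would set $\kappa:=\|J_1(\mathsf j r)\cos\theta\|_{L^2(D)}$ and introduce
\[ \mathcal Q(\omega):=J(\omega)-2\mathsf j^2 E(\omega)=\int_D\omega^2\dd{\mathbf x}-\mathsf j^2\int_D\omega\,\mathcal G\omega\dd{\mathbf x}, \]
which is conserved. The first computation is to verify, using $J_1(\mathsf j)=0$, the two identities $\mathcal G J_0(\mathsf j r)=\mathsf j^{-2}\big(J_0(\mathsf j r)-J_0(\mathsf j)\big)$ and $\int_D J_0(\mathsf j r)\dd{\mathbf x}=0$. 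Together they give $\bar\omega-\mathsf j^2\mathcal G\bar\omega\equiv A J_0(\mathsf j)$ (a constant) for every $\bar\omega\in\mathbf O$, whence $\mathcal Q(\bar\omega)=0$. Expanding $\mathcal Q(\omega_t-\bar\omega)$, the cross term is $\int_D(\bar\omega-\mathsf j^2\mathcal G\bar\omega)\,\omega_t\dd{\mathbf x}=A J_0(\mathsf j)\int_D\omega_t\dd{\mathbf x}$; since the total circulation $\int_D\omega_t\dd{\mathbf x}$ is conserved along with $E$ and $J$, for a fixed $\bar\omega\in\mathbf O$ the quantity $\mathcal Q(\omega_t-\bar\omega)$ is itself conserved. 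Taking $\bar\omega$ to be a nearest point of $\mathbf O$ to $\omega_0$, this yields $|\mathcal Q(\omega_t-\bar\omega)|=|\mathcal Q(\omega_0-\bar\omega)|\le C\varepsilon^2$ for all $t$.

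Next I would decompose $\omega_t$ into its radial part $\omega_t^{\mathrm{rad}}$, its projection onto the resonant plane $W:=\operatorname{span}\{J_1(\mathsf j r)\cos\theta,J_1(\mathsf j r)\sin\theta\}$, written $P_1\omega_t=\rho_t J_1(\mathsf j r)\cos(\theta+\beta_t)$ with $\rho_t\ge0$, and a remainder $\omega_t^\perp$ orthogonal to both. Minimizing over the rotation parameter $\alpha$ gives the exact formula
\[ \mathrm{dist}_2(\omega_t,\mathbf O)^2=\big\|\omega_t^{\mathrm{rad}}-A J_0(\mathsf j r)\big\|_{L^2}^2+(\rho_t-B)^2\kappa^2+\|\omega_t^\perp\|_{L^2}^2. \]
The virtue of $\mathcal Q$ is its spectral signature in the Dirichlet eigenbasis of $-\Delta$: it is strictly positive on every mode with eigenvalue exceeding $\mathsf j^2$ (there is a spectral gap, the next eigenvalue being $j_{2,1}^2$), it vanishes exactly on $W$, and it is negative on exactly one mode, the radial ground state $J_0(j_{0,1}r)$ (since $j_{0,1}<\mathsf j$). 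Because $\omega_t^\perp$ and the higher radial modes of $\omega_t^{\mathrm{rad}}-AJ_0(\mathsf j r)$ all lie in the positive part of the spectrum, the bound $|\mathcal Q(\omega_t-\bar\omega)|\le C\varepsilon^2$ controls $\|\omega_t^\perp\|_{L^2}^2$ and those higher radial modes, up to the single negative contribution of the ground mode. Thus everything reduces to estimating the ground‑mode coefficient of $\omega_t^{\mathrm{rad}}-AJ_0(\mathsf j r)$ and the scalar $\rho_t$.

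For the ground mode I would use the two linear invariants, the impulse $I$ and the circulation $\int_D\omega\dd{\mathbf x}$, both of which are $O(\varepsilon)$ on $\omega_t-\bar\omega$ and both of which have nonzero overlap with $J_0(j_{0,1}r)$. The key point is that $\mathcal Q$, restricted to the radial sector and to the codimension‑two subspace cut out by these two functionals, is coercive; this is the quantitative spectral estimate underpinning the whole argument, and it rests on exact Bessel identities flowing from $J_1(\mathsf j)=0$ (for instance $\sum_k(\mathsf j^2-j_{0,k}^2)^{-1}=0$, which renders the circulation constraint marginal, the impulse then supplying the strict gap). This gives $\|\omega_t^{\mathrm{rad}}-AJ_0(\mathsf j r)\|_{L^2}=O(\varepsilon)$. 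For $\rho_t$ I would instead expand the conserved enstrophy $J(\omega_t)=J(\omega_0)$ in the decomposition above; using the radial and orthogonal estimates just obtained, all cross and remainder terms are absorbed and one is left with $\rho_t^2-B^2=O\big((A^2+B^2)^{1/2}\varepsilon+\varepsilon^2\big)$.

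The dichotomy in the statement now appears through the elementary identity $\rho_t-B=(\rho_t^2-B^2)/(\rho_t+B)$. When $B\neq0$ one has $\rho_t+B\ge B$, so $|\rho_t-B|\le B^{-1}|\rho_t^2-B^2|\le CB^{-1}\big((A^2+B^2)^{1/2}\varepsilon+\varepsilon^2\big)$, and since this term dominates the $O(\varepsilon)$ radial and orthogonal contributions, the assembled estimate is exactly \eqref{thmi}. When $B=0$ the division is unavailable and one can only write $\rho_t=\sqrt{\rho_t^2}=O\big(|A|^{1/2}\varepsilon^{1/2}+\varepsilon\big)$, producing the weaker square‑root rate \eqref{thmii}. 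I expect the main obstacle to be the coercivity of $\mathcal Q$ on the constrained radial sector: because $\mathsf j^2$ lies strictly above the first Dirichlet eigenvalue, the energy–enstrophy form is genuinely indefinite, and proving that the impulse (together with the circulation) exactly neutralizes the single unstable radial direction is the technical heart of the argument; the resonant‑amplitude step, by contrast, is elementary once the quadratic relation for $\rho_t^2$ is in hand.
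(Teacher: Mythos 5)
Your strategy is sound and does reach the stated bounds, but it takes a genuinely different route from the paper in its key analytic step, so let me compare. The conserved functional you build is the same one the paper uses: your $\mathcal Q=J-2\mathsf j^2E$ is exactly $-\mathsf j^2$ times the paper's energy--Casimir $EC$, and the second half of your argument (the impulse $I$ pinning the radial coefficient, the enstrophy giving $|\rho_t^2-B^2|\le C(A^2+B^2)^{1/2}\varepsilon+C\varepsilon^2$, then the division-versus-square-root dichotomy in $B$) coincides with the paper's Lemmas~\ref{lema403} and \ref{lema404}. The real divergence is in how coercivity of the quadratic form transverse to $\mathbf O$ is obtained. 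The paper restricts to mean-zero solutions and works with the modified eigenvalue problem \eqref{lep} (constant, unprescribed Dirichlet data), for which $\mathbf V$ is the \emph{first} eigenspace by the cited result \cite{GL08}; the form is then signed on $\mathring L^2(D)$, Proposition~\ref{prop31} follows immediately, and non-mean-zero data are handled afterwards by the rotating-frame reduction of Lemmas~\ref{lema405} and \ref{lema406}. You instead expand in the standard Dirichlet eigenbasis, where $\mathcal Q$ is indefinite (negative exactly on the ground mode $J_0(j_{0,1}r)$, null on $W$, positive with a gap above $\mathsf j^2$), and you neutralize the single negative direction using the two conserved linear functionals, circulation and impulse. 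This buys you a direct treatment of general perturbations with no rotating frame, and it is self-contained modulo classical Bessel facts: your identity $\sum_k(\mathsf j^2-j_{0,k}^2)^{-1}=0$ is the Mittag--Leffler expansion of $J_1/J_0$ evaluated at the zero $J_1(\mathsf j)=0$, and by a Cauchy--Schwarz argument it shows that $\mathcal Q$ is positive semi-definite on radial mean-zero functions with null direction exactly $J_0(\mathsf jr)$, on which the impulse is nonzero (the paper's computation $\int_0^1 r^3J_0(\mathsf jr)\dd{r}\neq 0$). In effect, you re-prove for the radial sector the spectral fact that the paper simply imports from \cite{GL08}.

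The caveat is that the constrained coercivity you yourself identify as the technical heart is asserted rather than proven, and a complete write-up needs four pieces, all true and standard but none free: (i) the Cauchy--Schwarz argument for semi-definiteness under the circulation constraint in the marginal case $\sum_k\ell_k^2/\mu_k=0$, together with the identification of the null direction as $J_0(\mathsf jr)$; (ii) strict positivity after adjoining the impulse constraint; (iii) a compactness argument (using compactness of $\mathcal G$) upgrading strict positivity to coercivity $\mathcal Q(v)\ge c\|v\|_{L^2(D)}^2$ on the constrained radial subspace; and (iv) a perturbative version, since along the flow the circulation and impulse of $\omega_t-\bar\omega$ are only $O(\varepsilon)$ rather than zero, which requires splitting off a fixed two-dimensional complement and absorbing the $O(\varepsilon)$ components. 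None of these steps fails, so your outline survives; but note that your marginal-positivity statement, restricted to mean-zero functions, is precisely the statement that $\mathbf V$ is the first eigenspace of \eqref{lep}, so you could equally import \cite{GL08} as the paper does and keep the rest of your architecture --- in particular, still avoiding the rotating-frame step, which is a genuine structural simplification over the paper's proof.
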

 
Unlike the qualitative result of Theorem A, our main result, Theorem \ref{thmmain}, is quantitative. The approach we use to prove Theorem \ref{thmmain} also differs from that of Theorem A (although both rely on the conserved quantities of the Euler equations). Specifically, Theorem A is essentially based on a compactness argument, whereas, to establish Theorem \ref{thmmain}, we must carry out a detailed quantitative analysis of the parameters that determine a rotational orbit, based on the conserved quantities. It should be emphasized that Theorem \ref{thmmain} concerns only the $L^2$ norm; obtaining a quantitative estimate for general $L^p$ norms appears to be difficult.

The idea of the proof is inspired by the works \cite{E24,WS99} on the orbital stability (up to translations) of the first Laplacian eigenstates on a two-dimensional flat torus. For the flat 2-torus $\mathbb T^2:=\mathbb R^2/(2\pi\mathbb Z^2),$ the first eigenspace of $-\Delta$ subject to mean-zero condition is   \[\operatorname{span}\left\{\cos x_1, \sin x_1, \cos x_2, \sin x_2\right\},\]
and a translational orbit within the first eigenspace can be written as 
\[ \left\{a\cos(x_1+\alpha)+b\cos(x_2+\beta)\;\middle|\; \alpha,\beta\in\mathbb R\right\},\]
where $a,b\in\mathbb R$. 
%Following Arnold's energy-Casimir method \cite{A65,A69}, one can use  the kinetic energy and the enstrophy to prove that the first energy shell $\mathcal S$ is stable, \[\mathcal S:=\left\{a\cos(x_1+\alpha)+b\cos(x_2+\beta)\;\middle|\; \alpha,\beta\in\mathbb R,\, a^2+b^2=1\right\}.\]
 To obtain orbital stability, Wirosoetisno and Shepherd \cite{WS99} first proposed using higher-order Casimirs to control the parameters $a$ and $b$. Motivated by \cite{WS99}, Elgindi \cite{E24} established the quantitative $L^2$-orbital stability  through several technical improvements (see also \cite{WZ23} for the qualitative $L^p$-orbital stability for any $1<p<\infty$).
Since each translation orbit is determined by two parameters, it suffices, in principle, to use only two conserved quantities to obtain the desired estimates. Indeed, Elgindi used the enstrophy and a modification of the quartic Casimir in his paper.
In the setting of this paper, any rotational orbit in $\mathbf{V}$ is also characterized by two parameters, so we still need to use two conserved quantities. Besides the enstrophy, we use the moment of fluid impulse $I$ in our proof. Compared to high-order Casimirs, the linear functional $I$ is simpler, making the associated calculations in the proof  considerably more straightforward.

Recently, the above method has been adapted by Delgadino and Melzi \cite{DM25} to establish the quantitative $L^2$-orbital stability for degree-2 Rossby-Haurwitz waves of the incompressible Euler equation on $\mathbb S^2$ (note that the qualitative $L^p$-orbital stability  for any $1<p<\infty$ was established in \cite{CWZ23}).  An interesting direction for future work is to apply this method to establish quantitative orbital stability for the first Laplacian eigenstate on a flat 2-torus of arbitrary shape, whose qualitative stability has been proved in \cite{W252}.

Finally, we emphasize that  Theorem \ref{thmmain} seems to suggest that \emph{more radial symmetry leads to stronger instability}.
To see this, we  assume  that $A^2+B^2=1$ and $\varepsilon\leq 1$. Then \eqref{thmi} becomes
    \[
    \mathrm{dist}_2(\omega_0, \mathbf{O}) \leq \varepsilon 
    \quad  \Longrightarrow  \quad 
    \mathrm{dist}_2(\omega_t, \mathbf{O}) \leq C B^{-1} \varepsilon 
    \ \ \forall\, t \in \mathbb{R}.
    \]
   Therefore, flow with more radial symmetry (larger 
$A$, smaller $B$) leads to a larger factor 
$B^{-1}$, which amplifies the perturbations.
 When the flow becomes purely radial,  \eqref{thmii} becomes
  \[\mathrm{dist}_2(\omega_0, \mathbf{O}) \leq \varepsilon 
    \quad \Longrightarrow \quad 
     \mathrm{dist}_2(\omega_t, \mathbf{O}) \leq C \varepsilon^{1/2} 
    \ \ \forall\, t \in \mathbb{R},
\]
which means that the stability exhibits a qualitative change shifting from $O(\varepsilon)$ to $O(\varepsilon^{1/2})$. See also Remark \ref{rk45}. Note that a similar phenomenon has also been observed in \cite{E24}: more symmetric flows (exact shear flows and exact cellular flows) appear to be less stable than less symmetric flows (flows with regions of both shearing and cat's-eye structures).

 This paper is organized as follows. In Section \ref{sec2}, we prove two Poincar\'e-type inequalities for later use. In Section \ref{sec3}, we establish the quantitative stability of the space $\mathbf V$. In Section \ref{sec4}, we give the proof of Theorem \ref{thmmain}.

\section{Two Poincar\'e-type inequalities}\label{sec2}
 
 We begin by considering the following Laplacian eigenvalue problem:
\begin{equation}\label{lep}
	\begin{cases}
		-\Delta u = \Lambda u, & \mathbf{x} \in D, \\
		u|_{\partial D} =c,\\
		\displaystyle \int_{D} u \, \dd{\mathbf{x}} = 0,
	\end{cases}
\end{equation} 
where $c$ is an unprescribed constant.
Denote by $\mathring L^2(D)$ the mean-zero subspace of $L^2(D)$, i.e.,
\[\mathring L^2(D):=\left\{v\in L^2(D) \;\middle|\;  \int_D v \dd{\mathbf x}=0\right\},\] 
 and define an operator $\mathcal T$ on $\mathring L^2(D)$ by setting
 \[\mathcal Tv:=\mathcal Gv-\frac{1}{|D|}\int_D\mathcal Gv \dd{\mathbf x},\quad v\in\mathring L^2(D).\]
 Then, it is easy to check that \eqref{lep} is equivalent to the following operator equation:
\[v=\Lambda \mathcal Tv,\quad v\in  \mathring L^2(D).\]
 Denote by $\Lambda_i$ the $i$-th eigenvalue (counted without multiplicity) of \eqref{lep}, and by $\mathbf E_i$ the corresponding eigenspace.  
Since $\mathcal T$ is a compact, symmetric, and positive-definite operator on $\mathring L^2(D)$, all eigenvalues can be arranged as
\begin{equation}\label{lmdai}
0<\Lambda_1<\Lambda_2<\dots\to+\infty,
\end{equation}
and all eigenspaces $\mathbf E_k$ are finite-dimensional and mutually orthogonal.

\begin{lemma}\label{lema21}
It holds that
	\begin{equation}\label{l1}
	\displaystyle \int_{D} v\mathcal{G}v \, \dd{\mathbf{x}} \leq \frac{1}{\Lambda_1}\displaystyle \int_{D} v^2 \, \dd{\mathbf{x}},\quad\forall\,v\in \mathring L^2(D), 
	\end{equation}
	and the equality holds if and only if $v\in\mathbf E_1;$
	and
 \begin{equation}\label{l2}
	\displaystyle \int_{D} v\mathcal{G}v \, \dd{\mathbf{x}} \leq \frac{1}{\Lambda_2}\displaystyle \int_{D} v^2 \, \dd{\mathbf{x}},\quad\forall\,v\in \mathbf E_1^{\perp},
	\end{equation}
		and the equality holds if and only if $v\in\mathbf E_2.$
\end{lemma}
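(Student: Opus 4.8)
My plan is to reduce both inequalities to the spectral decomposition of the operator $\mathcal T$, exploiting that on the mean-zero space $\mathring L^2(D)$ the quadratic form $v\mapsto\int_D v\mathcal Gv\,\dd{\mathbf x}$ coincides with the form $v\mapsto\int_D v\mathcal Tv\,\dd{\mathbf x}$ attached to the operator whose spectrum we actually control. The first step is to record the identity
\[
\int_D v\mathcal Gv\,\dd{\mathbf x}=\int_D v\mathcal Tv\,\dd{\mathbf x},\qquad v\in\mathring L^2(D).
\]
This is immediate from the definition of $\mathcal T$: subtracting the constant $\tfrac{1}{|D|}\int_D\mathcal Gv\,\dd{\mathbf x}$ contributes the extra term $-\tfrac{1}{|D|}\big(\int_D\mathcal Gv\,\dd{\mathbf x}\big)\big(\int_D v\,\dd{\mathbf x}\big)$, which vanishes precisely because $\int_D v\,\dd{\mathbf x}=0$ for $v\in\mathring L^2(D)$.

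Next I would invoke the spectral theorem for $\mathcal T$. Since $\mathcal T$ is compact, symmetric, and positive-definite on the Hilbert space $\mathring L^2(D)$, its eigenfunctions form a complete orthonormal basis (positive-definiteness guarantees a trivial kernel, so completeness holds). Grouping these eigenfunctions by eigenspace, every $v\in\mathring L^2(D)$ admits the orthogonal decomposition $v=\sum_{k\ge1}v_k$, where $v_k\in\mathbf E_k$ is the orthogonal projection of $v$ onto $\mathbf E_k$ and $\mathcal Tv_k=\Lambda_k^{-1}v_k$. By Parseval's identity,
\[
\int_D v^2\,\dd{\mathbf x}=\sum_{k\ge1}\|v_k\|_{L^2(D)}^2,
\qquad
\int_D v\mathcal Tv\,\dd{\mathbf x}=\sum_{k\ge1}\frac{1}{\Lambda_k}\|v_k\|_{L^2(D)}^2 .
\]

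The two inequalities then follow by termwise comparison of these series, using the ordering \eqref{lmdai}. For \eqref{l1}, since $\Lambda_k^{-1}\le\Lambda_1^{-1}$ for every $k$, combining the two displays above gives $\int_D v\mathcal Gv\,\dd{\mathbf x}\le\Lambda_1^{-1}\int_D v^2\,\dd{\mathbf x}$, with equality exactly when $v_k=0$ for all $k$ with $\Lambda_k>\Lambda_1$, i.e. when $v\in\mathbf E_1$. For \eqref{l2}, the restriction $v\in\mathbf E_1^\perp$ annihilates the $k=1$ term, so the sums run over $k\ge2$, where $\Lambda_k^{-1}\le\Lambda_2^{-1}$; this yields $\int_D v\mathcal Gv\,\dd{\mathbf x}\le\Lambda_2^{-1}\int_D v^2\,\dd{\mathbf x}$, with equality iff $v_k=0$ for all $k\ge3$, i.e. $v\in\mathbf E_2$. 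I do not anticipate a serious obstacle: this is the standard Rayleigh-quotient comparison, and the only points requiring care are the reduction identity of the first step and the completeness of the eigenbasis, both of which are handed to us by the mean-zero constraint and the stated properties of $\mathcal T$.
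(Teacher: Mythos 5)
Your proof is correct and follows essentially the same route as the paper's: both reduce the quadratic form of $\mathcal G$ to that of $\mathcal T$ using the mean-zero condition, expand $v$ in the orthogonal eigenspace decomposition of $\mathcal T$, and conclude by termwise comparison of the resulting series, with the equality cases read off from which projections vanish. Your write-up is in fact slightly more careful than the paper's, since you explicitly justify the identity $\int_D v\mathcal Gv\,\dd{\mathbf x}=\int_D v\mathcal Tv\,\dd{\mathbf x}$ and the completeness of the eigenbasis, both of which the paper uses implicitly.
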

\begin{proof}
Let $\mathcal P_i$ denote the orthogonal $L^2$-projector onto $\mathbf E_i$.  For any $v \in \mathring L^2(D)$, we can write  
$v=\sum_{i=1}^\infty\mathcal P_iv.$
 Then  
$\mathcal Tv=\sum_{i=1}^\infty \Lambda_i^{-1}\mathcal P_iv.$
By orthogonality, we have that
	\[
	\displaystyle \int_{D} v\mathcal{T}v \, \dd{\mathbf{x}} =\displaystyle \int_{D} \left(\sum_{i=1}^\infty\mathcal P_iv\right)\left(\sum_{j=1}^\infty\frac{1}{\Lambda_j}\mathcal P_jv\right) \, \dd{ \mathbf{x}} 
	=\sum_{i=1}^\infty\frac{1}{\Lambda_i}\displaystyle \int_{D} |\mathcal P_iv|^2 \, \dd{\mathbf{x}}.
	\]
On the other hand,
	\[
	\displaystyle \int_{D} v^2 \, \dd{\mathbf{x}} =\displaystyle \int_{D} \left(\sum_{i=1}^\infty\mathcal P_iv\right)\left(\sum_{j=1}^\infty \mathcal P_jv\right) \, \dd{\mathbf{x} }
	=\sum_{i=1}^\infty\displaystyle \int_{D} |\mathcal P_iv|^2 \, \dd {\mathbf{x}}.
	\]
In view of \eqref{lmdai}, it follows that 
	\begin{align*}
	\displaystyle \int_{D} v\mathcal{G}v \, \dd{\mathbf{x}} =\displaystyle \int_{D} v\mathcal{T}v \, \dd{\mathbf{x}}
	=\sum_{i=1}^\infty\frac{1}{\Lambda_i}\displaystyle \int_{D} |\mathcal P_iv|^2 \, \dd {\mathbf{x} }\leq 	\frac{1}{\Lambda_1}\sum_{i=1}^\infty\displaystyle \int_{D} |\mathcal P_iv|^2 \, \dd{\mathbf{x}}=\frac{1}{\Lambda_1}	\displaystyle \int_{D} v^2 \, \dd{\mathbf{x}},
	\end{align*} 
		and the equality holds if and only if $v\in\mathbf E_1.$
 This proves \eqref{l1}. To prove \eqref{l2}, notice that $\mathcal P_1 v\equiv 0$ for any  $v\in \mathbf  E_1^{\perp}$. Repeating the above calculations,
  we have that
	\begin{align*}
	\displaystyle \int_{D} v\mathcal{G}v \, \dd{\mathbf{x}} =\displaystyle \int_{D} v\mathcal{T}v \, \dd{\mathbf{x}}
	=\sum_{i=2}^\infty\frac{1}{\Lambda_i}\displaystyle \int_{D} |\mathcal P_iv|^2 \, \dd{\mathbf{x} }\leq 	\frac{1}{\Lambda_2}\sum_{i=2}^\infty\displaystyle \int_{D} |\mathcal P_iv|^2 \, \dd{\mathbf{x}}=\frac{1}{\Lambda_2}	\displaystyle \int_{D} v^2 \, \dd{\mathbf{x}},
	\end{align*}
		and the equality holds if and only if $v\in\mathbf E_2.$
 The proof is complete.
\end{proof}

The following lemma is crucial to subsequent discussion.
\begin{lemma}\label{lema22}
It holds that 
\[\Lambda_1=\mathsf j^2,\quad \mathbf E_1=\mathbf V.\]
\end{lemma}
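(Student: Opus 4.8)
The plan is to diagonalize the rotationally invariant operator $-\Delta$ by separation of variables in polar coordinates and then read off the smallest eigenvalue together with its eigenspace. Since $-\Delta$ commutes with rotations about the origin, the spectral decomposition of $\mathcal T$ respects the angular Fourier modes, so it suffices (by completeness of $\{\cos n\theta,\sin n\theta\}$) to look for eigenfunctions of the separated form $u=R(r)\cos(n\theta)$ or $u=R(r)\sin(n\theta)$ with $n=0,1,2,\dots$. Substituting into $-\Delta u=\Lambda u$ reduces the radial part to Bessel's equation of order $n$ with parameter $\sqrt{\Lambda}$; requiring regularity at the origin (forced by $u\in\mathring L^2(D)$ and the smoothness of eigenfunctions) discards the singular solution $Y_n$ and leaves $R(r)=J_n(\sqrt{\Lambda}\,r)$ up to a scalar.

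Next I would impose the two side constraints in \eqref{lep}, observing that they act very differently on the radial mode and on the higher modes. For $n\geq 1$ the mean-zero constraint is automatic since $\int_0^{2\pi}\cos(n\theta)\,\dd{\theta}=\int_0^{2\pi}\sin(n\theta)\,\dd{\theta}=0$, while the requirement that $u|_{\partial D}$ be constant forces $J_n(\sqrt{\Lambda})=0$, i.e. $\sqrt{\Lambda}=j_{n,k}$ for some $k\geq 1$. For $n=0$ the roles reverse: every radial $u$ is automatically constant on $\partial D$, so the binding condition is mean-zero. The key computation here is
\[\int_D J_0(\sqrt{\Lambda}\,r)\,\dd{\mathbf x}=2\pi\int_0^1 J_0(\sqrt{\Lambda}\,r)\,r\,\dd{r}=\frac{2\pi}{\sqrt{\Lambda}}\,J_1(\sqrt{\Lambda}),\]
which follows from the recurrence $\tfrac{d}{ds}\bigl(sJ_1(s)\bigr)=sJ_0(s)$ after the substitution $s=\sqrt{\Lambda}\,r$. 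Hence the mean-zero constraint again yields $J_1(\sqrt{\Lambda})=0$, i.e. $\sqrt{\Lambda}=j_{1,k}$.

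Collecting these cases, the full spectrum of \eqref{lep} consists of the numbers $j_{1,k}^2$ (arising from \emph{both} the $n=0$ and the $n=1$ sectors) together with $j_{n,k}^2$ for $n\geq 2$. To conclude $\Lambda_1=\mathsf j^2=j_{1,1}^2$, I would invoke the monotonicity of Bessel zeros in the order, $j_{1,1}<j_{2,1}<j_{3,1}<\cdots$, which places every eigenvalue coming from $n\geq 2$ strictly above $j_{1,1}^2$, while within the $n\in\{0,1\}$ sectors the smallest admissible value is $j_{1,1}$. The eigenfunctions realizing $\Lambda_1$ are then exactly $J_0(\mathsf j r)$ from the $n=0$ sector and $J_1(\mathsf j r)\cos\theta$, $J_1(\mathsf j r)\sin\theta$ from the $n=1$ sector, whose span is precisely $\mathbf V$; therefore $\mathbf E_1=\mathbf V$.

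The main obstacle is not a single hard estimate but the careful bookkeeping of how the two constraints single out \emph{different} orders of Bessel zeros for the radial mode versus the angular modes. The crucial and slightly counterintuitive point is that the mean-zero condition promotes the first $n=0$ eigenvalue from the naive Dirichlet value $j_{0,1}^2$ up to $j_{1,1}^2$, so that it coincides with the first eigenvalue of the $n=1$ sector and thereby forces the three-dimensional eigenspace $\mathbf V$. Beyond this, one must take modest care to justify the separation of variables rigorously and to invoke the comparison $j_{1,1}<j_{2,1}$, both of which are standard.
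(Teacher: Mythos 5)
Your proposal is correct in substance, but it is worth noting that the paper itself does not prove this lemma at all: it simply cites \cite[Proposition 3.4]{GL08} (whose subject is exactly the eigenvalue problem \eqref{lep}) and \cite[Lemma 2.5]{W25}. What you have done is reconstruct, self-contained, the argument that the paper outsources to those references: decompose into angular Fourier sectors (legitimate because $\mathcal G$, and hence $\mathcal T$, commutes with rotations of the disk), solve the radial Bessel equation in each sector, and observe the key asymmetry in how the two side constraints act --- for $n\geq 1$ the constant-boundary-data condition forces $J_n(\sqrt{\Lambda})=0$ while the mean-zero condition is free, whereas for $n=0$ the boundary condition is free and the mean-zero condition, via $\int_D J_0(\sqrt{\Lambda}r)\,\dd{\mathbf x}=2\pi J_1(\sqrt{\Lambda})/\sqrt{\Lambda}$, forces $J_1(\sqrt{\Lambda})=0$. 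This is precisely the mechanism that lifts the radial sector from $j_{0,1}^2$ to $j_{1,1}^2$ and produces the three-dimensional eigenspace $\mathbf V$, and your identification of it as the crux is exactly right. Two points deserve slightly more care than you give them: first, discarding the singular solution for $n=0$ cannot rest on the $L^2$ condition, since $Y_0(\sqrt{\Lambda}r)\sim\log r$ \emph{is} square-integrable on $D$; you need the elliptic-regularity statement (which you do mention) that any eigenfunction of $\mathcal T$ satisfies $-\Delta u=\Lambda u$ distributionally and is therefore smooth, which $Y_0$ is not. Second, the claim that separation of variables exhausts the spectrum should be justified by projecting an arbitrary eigenfunction onto each angular mode and noting that each nonzero projection is again an eigenfunction with the same eigenvalue; this is standard but is the step that turns your list of separated eigenfunctions into the full spectral resolution. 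With those details filled in, your argument is a complete and correct replacement for the paper's citation, and has the advantage of making the lemma self-contained.
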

\begin{proof}
See \cite[Proposition 3.4]{GL08} or \cite[Lemma 2.5]{W25}.
\end{proof}

\begin{remark}
  It is well known that the subspace $\mathrm{span}\left\{J_1(\mathsf jr)\cos\theta,J_1(\mathsf jr)\sin \theta \right\}$ of $\mathbf V$ is the  \emph{second} eigenspace of  \begin{equation*}
  \begin{cases} 
  -\Delta u=\lambda u,&\mathbf x\in D,\\u|_{\partial D}=0
  \end{cases}
  \end{equation*} with $\mathsf j^2$ being the second eigenvalue.
\end{remark}

\section{Quantitative stability of $\mathbf V$}\label{sec3}
A key step in the proof of Theorem \ref{thmmain} is the following quantitative estimate on the stability of the space $\mathbf{V}$ under mean-zero perturbations, which is obtained by applying the energy–Casimir method proposed by Arnold \cite{A65,A69}.

\begin{proposition}\label{prop31}
Let $\omega(t,\mathbf{x})$ be a smooth solution to the Euler equation~\eqref{vor}, satisfying $\omega_t = \omega(t,\cdot) \in \mathring{L}^2(D)$\footnote{This assumption is reasonable in view of the conservation law (C2) in Section~\ref{sec1}.}.
 Then
 \[\mathrm{dist}_2(\omega_t,\mathbf V)\leq  \sqrt{\frac{\Lambda_2}{\Lambda_2-\Lambda_1}} \mathrm{dist}_2(\omega_0,\mathbf V),\quad \forall\,t\in\mathbb R.\]
\end{proposition}
 
\begin{proof} 
Write
\[\omega_t=f_t+g_t,\quad f_t:=\mathcal P_1 \omega_t,\quad g_t:=\omega_t-\mathcal P_1\omega_t.\]
Then
 \[\mathrm{dist}_2(\omega_t,\mathbf V)=\|g_t\|_{L^2(D)}.\] According to Lemmas \ref{lema21} and \ref{lema22},
 \begin{equation}\label{fglqq1}
 \int_{D} f_t\mathcal{G}f_t  \dd{\mathbf{x}} = \frac{1}{\Lambda_1} \displaystyle \int_{D} f_t^2 \, \dd{\mathbf{x}},\quad   \int_{D} g_t\mathcal{G}g_t \, \dd{\mathbf{x}} \leq \frac{1}{\Lambda_2} \displaystyle \int_{D} g_t^2 \, \dd{\mathbf{x}}. 
 \end{equation}
 Define the energy-Casimir $EC$ by setting
 	\[EC(v):=2E(v)-\frac{1}{\Lambda_1}J(v)=\int_Dv\mathcal Gv \dd{\mathbf x}-\frac{1}{\Lambda_1}\int_Dv^2\dd {\mathbf x},\]
 which is a conserved quantity of the Euler equation (cf. \eqref{defofe} and \eqref{defofj}).
 	We calculate as follows:
\begin{align*}
	EC(\omega_t)&=\displaystyle \int_{D} (f_t+g_t)\mathcal{G}(f_t+g_t) \, \dd{\mathbf{x}}-
	\frac{1}{\Lambda_1}\displaystyle \int_D (f_t+g_t)^2 \dd{\mathbf x}\\
	&=\displaystyle \int_{D} f_t\mathcal{G}f_t \, \dd{\mathbf{x}}+\int_{D} g_t\mathcal{G}g_t \, \dd{\mathbf{x}}-\frac{1}{\Lambda_1}\left(\displaystyle \int_D f_t^2 \dd {\mathbf x}+\displaystyle \int_D g_t^2 \dd{\mathbf x}\right)\\
	&=\displaystyle \int_{D} g_t\mathcal{G}g_t \, \dd{\mathbf{x}}-\frac{1}{\Lambda_1} \displaystyle \int_D g_t^2 \dd{\mathbf x}.
\end{align*}
Using \eqref{fglqq1} and the fact that $\mathcal G$ is positive-definite, we have that
\[  -\frac{1}{\Lambda_1} \displaystyle \int_D g_t^2 \dd{\mathbf x}\leq EC(\omega_t) \leq \left(\frac{1}{\Lambda_2}-\frac{1}{\Lambda_1}\right)\int_D {g_t}^2 \dd{\mathbf x},\]
which holds for any $t\in\mathbb R$.
Then
\[-\frac{1}{\Lambda_1}\displaystyle \int_D {g_0}^2 \dd{\mathbf x}\leq EC(\omega_0)=EC(\omega_t) \leq \left(\frac{1}{\Lambda_2}-\frac{1}{\Lambda_1}\right)\displaystyle  \int_D {g_t}^2 \dd{\mathbf x},\]
which gives
\[\displaystyle \int_D {g_t}^2 \dd{\mathbf x} \leq \frac{\Lambda_2}{\Lambda_2-\Lambda_1} \displaystyle \int_D {g_0}^2 \dd{\mathbf x}.\]
This concludes the proof.
\end{proof}

\section{Proof of Theorem \ref{thmmain}}\label{sec4}
\subsection{Mean-zero perturbations}
First, we prove Theorem \ref{thmmain} for mean-zero perturbations. 
Consider a smooth solution   $\omega(t,\mathbf x)$ of the Euler equation \eqref{vor} satisfying $\omega_t=\omega(t,\cdot) \in\mathring L^2(D)$ for any $t\in\mathbb R$.
Denote
$ \hat\omega_t:=\mathcal P_1\omega_t\in\mathbf V.$ Then  \begin{equation}\label{perpv}
\omega_t-\hat\omega_t\in\mathbf V^\perp
\end{equation}
and
\begin{equation}\label{dhat}
\|\omega_t-\hat\omega_t\|_{L^2(D)}=\mathrm{dist}_2(\omega_t,\mathbf V).
\end{equation}
Since $\mathbf O$ is compact in $L^2(D)$,  
 there exists  $\tilde\omega_t\in\mathbf O$ such that
\begin{equation}\label{dtil}
\|\omega_t-\tilde\omega_t\|_{L^2(D)}=\mathrm{dist}_2(\omega_t,\mathbf O).
\end{equation} 
Our aim is to obtain a quantitative estimate of $\|\omega_t-\tilde\omega_t\|_{L^2(D)}$ in terms of $\|\omega_0-\tilde\omega_0\|_{L^2(D)}$.

From now on, we assume that
\[
    \|\omega_0-\tilde\omega_0\|_{L^2(D)}\leq \varepsilon,  
\]
where $\varepsilon>0$ (not necessarily small).
For convenience, we use $C$ to denote a \emph{generic} constant, whose value may change from line to line.

\begin{lemma}\label{lema401}
$\|\omega_t-\tilde\omega_t\|_{L^2(D)}\leq C \varepsilon+\|\hat\omega_t-\tilde\omega_t\|_{L^2(D)}.$
\end{lemma}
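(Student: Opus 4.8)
The plan is to reduce this estimate to a triangle inequality glued together with the quantitative stability of $\mathbf{V}$ already proved in Proposition~\ref{prop31}. The key structural fact I would exploit is that the orbit sits inside the eigenspace: from the definition \eqref{defor}, every element of $\mathbf{O}$ is a linear combination of $J_0(\mathsf j r)$, $J_1(\mathsf j r)\cos\theta$ and $J_1(\mathsf j r)\sin\theta$ (expanding $\cos(\theta+\alpha)$), so $\mathbf{O}\subset\mathbf{V}$, and in particular $\tilde\omega_t\in\mathbf{V}$ for every $t$.

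First I would insert the projection $\hat\omega_t=\mathcal P_1\omega_t$ and apply the triangle inequality,
\[
\|\omega_t-\tilde\omega_t\|_{L^2(D)}\leq \|\omega_t-\hat\omega_t\|_{L^2(D)}+\|\hat\omega_t-\tilde\omega_t\|_{L^2(D)},
\]
where the first term is exactly $\mathrm{dist}_2(\omega_t,\mathbf{V})$ by \eqref{dhat}. The second term is already the quantity appearing on the right-hand side of the claim, so it remains only to bound the first term by $C\varepsilon$.

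For that I would invoke Proposition~\ref{prop31} to transport the distance to $\mathbf{V}$ backward in time,
\[
\mathrm{dist}_2(\omega_t,\mathbf{V})\leq \sqrt{\frac{\Lambda_2}{\Lambda_2-\Lambda_1}}\,\mathrm{dist}_2(\omega_0,\mathbf{V}),
\]
and then control the initial distance to $\mathbf{V}$ by the initial distance to the orbit: since $\tilde\omega_0\in\mathbf{O}\subset\mathbf{V}$ is an admissible competitor in the infimum defining $\mathrm{dist}_2(\omega_0,\mathbf{V})$, we get $\mathrm{dist}_2(\omega_0,\mathbf{V})\leq\|\omega_0-\tilde\omega_0\|_{L^2(D)}\leq\varepsilon$. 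Because $\Lambda_1=\mathsf j^2<\Lambda_2$ by Lemma~\ref{lema22}, the factor $\sqrt{\Lambda_2/(\Lambda_2-\Lambda_1)}$ is a fixed number depending only on the geometry of $D$, which I would absorb into the generic constant $C$.

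I do not expect any genuine obstacle here: this is the routine bookkeeping step that connects the abstract spectral estimate of Proposition~\ref{prop31} to the orbit $\mathbf{O}$, and all the analytic content has already been spent. The only point requiring a moment's care is confirming that the resulting constant $C$ is independent of $t$ (which it is, since Proposition~\ref{prop31} holds for all $t\in\mathbb{R}$) and of the orbit parameters $A,B$ (which it is, since the bound never sees $A$ or $B$); both are automatic and let the whole coefficient be swallowed into the generic $C$.
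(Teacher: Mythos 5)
Your proof is correct and follows essentially the same route as the paper: decompose through $\hat\omega_t=\mathcal P_1\omega_t$, propagate $\mathrm{dist}_2(\omega_t,\mathbf V)$ back to time zero via Proposition~\ref{prop31}, and use $\tilde\omega_0\in\mathbf O\subset\mathbf V$ as a competitor to get $\mathrm{dist}_2(\omega_0,\mathbf V)\leq\varepsilon$. The only cosmetic difference is that the paper exploits the orthogonality \eqref{perpv} to write a Pythagorean identity $\|\omega_t-\tilde\omega_t\|^2=\|\omega_t-\hat\omega_t\|^2+\|\hat\omega_t-\tilde\omega_t\|^2$ and then takes square roots, whereas you use the plain triangle inequality, which yields the same bound.
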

\begin{proof}
By \eqref{perpv} and Proposition \ref{prop31},
\begin{align*}
\|\omega_t-\tilde\omega_t\|^2_{L^2(D)}&=\|\omega_t-\hat\omega_t\|_{L^2(D)}^2+\|\hat\omega_t-\tilde\omega_t\|_{L^2(D)}^2\\
&\leq C\|\omega_0-\hat\omega_0\|_{L^2(D)}^2+\|\hat\omega_t-\tilde\omega_t\|_{L^2(D)}^2\\
&\leq C\|\omega_0-\tilde\omega_0\|_{L^2(D)}^2+\|\hat\omega_t-\tilde\omega_t\|_{L^2(D)}^2 \\
&\leq C\varepsilon^2+\|\hat\omega_t-\tilde\omega_t\|_{L^2(D)}^2.
\end{align*}
Then the desired estimate follows immediately.
\end{proof}

From Lemma \ref{lema401}, it suffices to estimate $\|\hat\omega_t-\tilde\omega_t\|_{L^2(D)}$.
 Since $\hat\omega_t,$ $\tilde\omega_t\in\mathbf V$, we can write 
\[\tilde\omega_t=A J_0(\mathsf jr)+B J_1(\mathsf jr)\cos(\theta+\alpha),\quad 
 \hat\omega_t=A'J_0(\mathsf jr)+B'J_1(\mathsf jr)\cos(\theta+\alpha'),\]
where $A, B, \alpha, A',B',\alpha'$ are all real numbers depending on $t$ and $B,B'\geq 0.$

\begin{lemma}\label{lema402}
$
\|\hat \omega_t-\tilde \omega_t\|_{L^2(D)}\leq C\left(|A-A'|+|B-B'|\right).
$
\end{lemma}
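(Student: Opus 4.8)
The plan is to reduce the estimate to a computation inside the finite-dimensional space $\mathbf V$ and then exploit the orthogonality, in $L^2(D)$, of the radial and angular components. The crucial first observation is that, although $\tilde\omega_t$ is defined in \eqref{dtil} as the closest point of $\mathbf O$ to $\omega_t$, it is in fact also the closest point of $\mathbf O$ to the projection $\hat\omega_t$. Indeed, $\omega_t-\hat\omega_t\in\mathbf V^\perp$ by \eqref{perpv}, while both $\hat\omega_t$ and every $w\in\mathbf O$ lie in $\mathbf V$; hence the Pythagorean identity gives
\[
\|\omega_t-w\|_{L^2(D)}^2=\|\omega_t-\hat\omega_t\|_{L^2(D)}^2+\|\hat\omega_t-w\|_{L^2(D)}^2,\quad\forall\,w\in\mathbf O,
\]
so that minimizing the left-hand side over $\mathbf O$ is equivalent to minimizing $\|\hat\omega_t-w\|_{L^2(D)}$. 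Consequently $\|\hat\omega_t-\tilde\omega_t\|_{L^2(D)}=\mathrm{dist}_2(\hat\omega_t,\mathbf O)$, and in particular this quantity is bounded above by $\|\hat\omega_t-w\|_{L^2(D)}$ for any single convenient competitor $w\in\mathbf O$.

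The key idea is then to test against the element of $\mathbf O$ carrying the \emph{same} angular phase as $\hat\omega_t$. Concretely, take
\[
w_\ast:=A J_0(\mathsf jr)+B J_1(\mathsf jr)\cos(\theta+\alpha')\in\mathbf O,
\]
using the fixed orbit parameters $A,B$ (which are exactly the parameters of $\tilde\omega_t$) together with the phase $\alpha'$ of $\hat\omega_t$. Since $\hat\omega_t=A'J_0(\mathsf jr)+B'J_1(\mathsf jr)\cos(\theta+\alpha')$, the difference collapses to
\[
\hat\omega_t-w_\ast=(A'-A)J_0(\mathsf jr)+(B'-B)J_1(\mathsf jr)\cos(\theta+\alpha'),
\]
so that $\alpha'$ no longer appears in any coefficient: matching the phases has cancelled the troublesome rotational degree of freedom, and one never needs to identify the optimal phase of $\tilde\omega_t$ explicitly.

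To finish, I would compute the $L^2$ norm of this difference using the orthogonality of the radial function $J_0(\mathsf jr)$ and the angular function $J_1(\mathsf jr)\cos(\theta+\alpha')$, whose product integrates to zero because the $\theta$-integral of $\cos(\theta+\alpha')$ over $[0,2\pi]$ vanishes. This gives
\[
\|\hat\omega_t-w_\ast\|_{L^2(D)}^2=(A-A')^2\|J_0(\mathsf jr)\|_{L^2(D)}^2+(B-B')^2\|J_1(\mathsf jr)\cos(\theta+\alpha')\|_{L^2(D)}^2,
\]
where the two norms are finite positive constants depending only on $\mathsf j$, the second being independent of $\alpha'$ by rotational invariance of the $L^2(D)$ norm. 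Combining with the bound from the first step and using $\sqrt{x+y}\le\sqrt x+\sqrt y$ to absorb the constants yields
\[
\|\hat\omega_t-\tilde\omega_t\|_{L^2(D)}\le\|\hat\omega_t-w_\ast\|_{L^2(D)}\le C\left(|A-A'|+|B-B'|\right),
\]
which is the claim. There is no serious analytic obstacle; the only genuinely non-routine points are the reduction in the first paragraph (recognizing that the orbit-minimizer for $\omega_t$ coincides with that for $\hat\omega_t$) and the realization that one should bound the distance from above by testing against the phase-matched competitor $w_\ast$, which disposes of the angular dependence at once.
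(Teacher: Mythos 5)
Your proposal is correct. It shares the paper's two main ingredients: the Pythagorean reduction showing $\|\hat\omega_t-\tilde\omega_t\|_{L^2(D)}=\mathrm{dist}_2(\hat\omega_t,\mathbf O)$ (the paper's equation \eqref{e401}, which you justify in the same way, via \eqref{perpv}), and the orthogonal expansion in the basis $J_0(\mathsf jr)$, $J_1(\mathsf jr)\cos\theta$, $J_1(\mathsf jr)\sin\theta$. Where you diverge is in how the rotational phase is eliminated. The paper expands $\|\hat\omega_t-\tilde\omega_t\|^2_{L^2(D)}$ directly, obtaining the cross term $-2BB'\cos(\alpha-\alpha')$, and then argues from minimality of $\tilde\omega_t$ that $BB'\cos(\alpha-\alpha')=BB'$, so the expression collapses to $|A-A'|^2\|J_0(\mathsf jr)\|^2_{L^2(D)}+|B-B'|^2\|J_1(\mathsf jr)\cos\theta\|^2_{L^2(D)}$; this identifies the optimal phase and in fact yields an exact identity for the distance. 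You instead bypass the characterization of the minimizer entirely: you bound $\mathrm{dist}_2(\hat\omega_t,\mathbf O)$ from above by testing against the single phase-matched competitor $w_\ast=AJ_0(\mathsf jr)+BJ_1(\mathsf jr)\cos(\theta+\alpha')\in\mathbf O$, for which the cross term never appears. Your route is slightly more economical (no minimality argument beyond the definition of distance is needed), at the cost of producing only the inequality rather than the exact formula for $\|\hat\omega_t-\tilde\omega_t\|_{L^2(D)}$ — which is all the lemma requires, and all that the rest of the paper uses.
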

\begin{proof}
First, in view of \eqref{perpv}-\eqref{dtil}, it is easy to see that
\begin{equation}\label{e401}
\|\hat\omega_t-\tilde\omega_t\|_{L^2(D)}=\mathrm{dist}_2(\hat\omega_t,\mathbf O).
\end{equation}
On the other hand, using the fact that $J_0(\mathsf jr)$, $J_1(\mathsf jr)\cos \theta,$ and $J_1(\mathsf jr)\sin\theta$ are mutually orthogonal in $\mathring L^2(D)$, we can calculate as follows:
\begin{equation}\label{e402}
\begin{split}
    &\|\hat \omega_t-\tilde \omega_t\|^2_{L^2(D)}\\
	=&|A-A'|^2\|J_0(\mathsf jr)\|^2_{L^2(D)}+|B\cos \alpha-B'\cos\alpha'|^2\|J_1(\mathsf jr)\cos\theta\|^2_{L^2(D)}\\
	&+|B\sin  \alpha-B'\sin \alpha'|^2\|J_1(\mathsf jr)\sin\theta\|^2_{L^2(D)}\\
	=&|A-A'|^2\|J_0(\mathsf jr)\|^2_{L^2(D)}+\left(B^2+{B'}^2-2BB'\cos\left(\alpha-\alpha'\right)\right)\|J_1(\mathsf jr)\cos\theta\|_{L^2(D)}^2. 
\end{split}
\end{equation}
Combining \eqref{e401} and \eqref{e402}, we deduce that $BB'\cos(\alpha-\alpha')=BB'$. 
Then \eqref{e402} becomes
\begin{align*}
\|\hat \omega_t-\tilde \omega_t\|^2_{L^2(D)}=&|A-A'|^2\|J_0(\mathsf jr)\|^2_{L^2(D)}+|B-B'|^2\|J_1(\mathsf jr)\cos\theta\|_{L^2(D)}^2,
\end{align*}
which yields the desired result.
\end{proof}

According to Lemma \ref{lema401}, the problem reduces to estimating $|A-A'|$ and $|B-B'|$.
First, we  estimate $|A-A'|$ with the help of the moment of fluid impulse $I$ (cf. \eqref{defofi}).

\begin{lemma}\label{lema403}
$|A-A'|\leq C\varepsilon.$
\end{lemma}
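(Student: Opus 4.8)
The plan is to exploit the moment of fluid impulse $I$ from \eqref{defofi}, whose integrand $|\mathbf x|^2=r^2$ is radially symmetric. In polar coordinates $I(v)=\int_0^1\int_0^{2\pi}r^2\,v\,r\,\dd{\theta}\,\dd{r}$, so $I$ annihilates every angular mode: for each $\alpha\in\mathbb R$,
\[
I\big(J_1(\mathsf jr)\cos(\theta+\alpha)\big)=0,
\]
since $\int_0^{2\pi}\cos(\theta+\alpha)\,\dd{\theta}=0$. Hence $I$ reads off exactly the $J_0$-coefficient of any element of $\mathbf V$. Writing $\kappa:=I(J_0(\mathsf jr))$, linearity gives $I(\tilde\omega_t)=A\kappa$ and $I(\hat\omega_t)=A'\kappa$, so that
\[
|A-A'|=\frac{1}{|\kappa|}\,\big|I(\tilde\omega_t)-I(\hat\omega_t)\big|,
\]
provided $\kappa\neq0$.

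It then suffices to bound $|I(\tilde\omega_t)-I(\hat\omega_t)|$ by $C\varepsilon$. First I would observe that $A$ is fixed along the orbit, so $I(\tilde\omega_t)=A\kappa=I(\tilde\omega_0)$ is constant in $t$, while (C3) gives $I(\omega_t)=I(\omega_0)$. Expanding by linearity, $I(\hat\omega_t)=I(\omega_t)-I(\omega_t-\hat\omega_t)$ and $I(\tilde\omega_0)=I(\omega_0)-I(\omega_0-\tilde\omega_0)$, which combine into the identity
\[
I(\tilde\omega_t)-I(\hat\omega_t)=I(\omega_t-\hat\omega_t)-I(\omega_0-\tilde\omega_0).
\]
Since $J_1(\mathsf j)=0$ forces $\int_DJ_0(\mathsf jr)\,\dd{\mathbf x}=0$, we have $\mathbf V\subset\mathring L^2(D)$, so both $\omega_t-\hat\omega_t$ and $\omega_0-\tilde\omega_0$ are mean-zero; I may therefore replace $r^2$ by $r^2-\overline{r^2}$ (its mean over $D$) in $I$ and apply Cauchy--Schwarz. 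This yields $|I(\omega_0-\tilde\omega_0)|\leq C\|\omega_0-\tilde\omega_0\|_{L^2(D)}\leq C\varepsilon$ and $|I(\omega_t-\hat\omega_t)|\leq C\|\omega_t-\hat\omega_t\|_{L^2(D)}=C\,\mathrm{dist}_2(\omega_t,\mathbf V)$. The latter is controlled by Proposition~\ref{prop31} together with $\mathrm{dist}_2(\omega_0,\mathbf V)\leq\mathrm{dist}_2(\omega_0,\mathbf O)\leq\varepsilon$, so $|I(\omega_t-\hat\omega_t)|\leq C\varepsilon$ as well. Summing the two bounds gives $|A-A'|\leq C\varepsilon$.

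The main obstacle is to verify $\kappa\neq0$, i.e.\ that $I$ genuinely detects the radial component; this is precisely where the special value $\mathsf j=j_{1,1}$ enters. I would compute $\kappa=2\pi\int_0^1 r^3J_0(\mathsf jr)\,\dd{r}=2\pi\,\mathsf j^{-4}\int_0^{\mathsf j}u^3J_0(u)\,\dd{u}$ via the standard identities $\frac{\mathrm{d}}{\mathrm{d}u}[uJ_1(u)]=uJ_0(u)$ and $\frac{\mathrm{d}}{\mathrm{d}u}[u^2J_2(u)]=u^2J_1(u)$, which furnish the antiderivative $\int u^3J_0(u)\,\dd{u}=u^3J_1(u)-2u^2J_2(u)$. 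Evaluating at $u=\mathsf j$ and using $J_1(\mathsf j)=0$ together with $J_2(\mathsf j)=\tfrac{2}{\mathsf j}J_1(\mathsf j)-J_0(\mathsf j)=-J_0(\mathsf j)$, the boundary terms collapse to $\int_0^{\mathsf j}u^3J_0(u)\,\dd{u}=2\mathsf j^2J_0(\mathsf j)$, whence $\kappa=4\pi\,\mathsf j^{-2}J_0(\mathsf j)$. Since $J_0(\mathsf j)=J_0(j_{1,1})\neq0$, we conclude $\kappa\neq0$, which closes the argument.
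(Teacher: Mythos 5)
Your proposal is correct and takes essentially the same route as the paper: both exploit the conserved, rotationally invariant impulse $I$, bound $|I(\tilde\omega_t)-I(\hat\omega_t)|$ by $C\varepsilon$ via H\"older/Cauchy--Schwarz together with Proposition~\ref{prop31}, and then divide by the constant $I(J_0(\mathsf jr))=2\pi\int_0^1 r^3J_0(\mathsf jr)\,\dd{r}$. The only (minor) divergence is how this constant is shown to be nonzero: you evaluate it exactly as $4\pi\mathsf j^{-2}J_0(\mathsf j)$ and invoke $J_0(j_{1,1})\neq 0$ (standard, though worth one line, e.g.\ $J_0'=-J_1$ so a common zero would force $J_0\equiv 0$ by ODE uniqueness), whereas the paper integrates by parts once and concludes from the positivity of $J_1$ on $(0,\mathsf j)$ that the integral is strictly negative.
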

\begin{proof}
First, we claim that 
\begin{equation}\label{claim1}
|I(\tilde\omega_t)-I(\hat\omega_t)|\leq C\varepsilon.
\end{equation}
In fact, since $I$ is conserved and rotationally invariant, it holds that
\[
I(\omega_t)=I(\omega_0),\quad I(\tilde\omega_t)=I(\tilde\omega_0).
\]
Then 
\begin{equation}\label{claim11}
\begin{split}
|I(\tilde\omega_t)-I(\hat\omega_t)|&= |I(\tilde\omega_0)-I(\hat\omega_t)|\\
&\leq  |I(\tilde\omega_0)-I(\omega_t)|+ |I(\omega_t)-I(\hat\omega_t)|\\
&=|I(\tilde\omega_0)-I(\omega_0)|+ |I(\omega_t)-I(\hat\omega_t)|.
\end{split}
\end{equation}
For the first term,   by H\"oler's inequality,
\begin{equation}\label{claim12}
|I(\tilde\omega_0)-I(\omega_0)|\leq \int_D|\tilde\omega_0 - \omega_0|\dd{\mathbf x} \leq C\|\tilde\omega_0 - \omega_0\|_{L^2(D)}\leq  C\varepsilon,
\end{equation}
For the second term, by \eqref{perpv} and Proposition \ref{prop31},
\begin{equation}\label{claim13}
|I(\omega_t)-I(\hat\omega_t)|\leq  C\|\omega_t - \hat\omega_t\|_{L^2(D)}\leq C\|\omega_0 - \hat\omega_0\|_{L^2(D)}\leq C\|\omega_0 - \tilde\omega_0\|_{L^2(D)}\leq C\varepsilon.
\end{equation}
The claim then follows from \eqref{claim11}-\eqref{claim13}. 
On the other hand, by a straightforward calculation,  
\begin{equation}\label{calc11}
	I(\tilde\omega_t)=\int_D|\mathbf x|^2(AJ_0(\mathsf jr)+BJ_1(\mathsf jr)\cos\left(\theta+\alpha\right)) \dd{\mathbf x}
=2\pi A\int_0^1r^3J_0(\mathsf jr) \dd {r}.
\end{equation}
We remark that  
 \[
\int_0^1 r^3J_0(\mathsf jr) \dd{r}\neq 0.
\]
In fact,
\[
 \int_0^1 r^3J_0(\mathsf jr) \dd{r} =\frac{1}{\mathsf j^4}\int_0^{\mathsf j}r^3J_0(r) \dd{r}
  =-\frac{2}{\mathsf j^4}\int_0^{\mathsf j} J_1(r)r^2\dd{r}
 <0,
\]
where we used   $(J_1(r)r)'=J_0(r)r$ (see \cite[p. 93]{Bo}) and the fact that $J_1$ is positive in $(0,\mathsf j)$.
\begin{comment}
where the last equality was obtained via the following calculations:
\begin{align*}
	\int_0^1r^3J_0(\mathsf jr) \dd {r}&=\frac{1}{{\mathsf j}^4}\int_0^{\mathsf j}s^3J_0(s) \dd {s}=\frac{1}{{\mathsf j}^4}\int_0^{\mathsf j}s^2(J_1(s)s)' \dd {s}=-\frac{2}{{\mathsf j}^4}\int_0^{\mathsf j}s^2J_1(s) \dd {s}\\
	&=-\frac{2}{{\mathsf j}^4}\int_0^{\mathsf j}(J_2(s)s^2)'\dd {s}
	=-\frac{2}{{\mathsf j}^4}J_2(\mathsf j){\mathsf j}^2=\frac{2}{{\mathsf j}^2}J_0(\mathsf j).
\end{align*}
\end{comment}
Similarly, we have that
\begin{equation}\label{calc12}
I(\hat\omega_t)=2\pi A'\int_0^1r^3J_0(\mathsf jr) \dd {r}.
\end{equation}
From \eqref{calc11} and \eqref{calc12}, we get
\begin{equation}\label{aap}
I(\tilde\omega_t)-I(\hat\omega_t)
=2\pi \int_0^1r^3J_0(\mathsf jr) \dd {r}(A-A').
\end{equation}
Combining \eqref{claim1} and \eqref{aap}, we conclude the proof.
 
\end{proof}

Next, we estimate $|B - B'|$ using   the enstrophy $J$ (cf. \eqref{defofj}).

\begin{comment}
We begin with the following basic estimates.
 \begin{lemma} 
 It holds that
 \[|B'|\leq C\left(\right).\]
\begin{equation}\label{obsv1}
\begin{split}
        A'^2+B'^2&\leq C\|\hat\omega_t\|^2_{L^2(D)}\leq C\| \omega_t\|^2_{L^2(D)}= C\| \omega_0\|^2_{L^2(D)}\\
        &\leq C\left(\| \omega_0-\tilde\omega_0\|^2_{L^2(D)}+\|\tilde\omega_0\|^2_{L^2(D)}\right)\leq C\left(A^2+B^2+\varepsilon^2\right),
    \end{split}
    \end{equation}
    and
    \begin{equation}\label{obsv2}
\begin{split}
        A'^2+B'^2&\geq C\|\hat\omega_t\|^2_{L^2(D)}\\
        &= C\left(\|\omega_t\|^2_{L^2(D)}-\| \omega_t-\hat\omega_t\|^2_{L^2(D)}\right) \\
        &\geq C\left(\|\omega_0\|^2_{L^2(D)}-C\| \omega_0-\hat\omega_0\|^2_{L^2(D)}\right)\\
        &\geq C\left(\|\tilde\omega_0\|^2_{L^2(D)}-\|\omega_0-\tilde\omega_0\|^2_{L^2(D)}-C\| \omega_0-\tilde\omega_0\|^2_{L^2(D)}\right)\\
        &\geq C\left(A^2+B^2-\varepsilon^2\right).
    \end{split}
    \end{equation}
   \end{lemma} 
\end{comment}

\begin{lemma}\label{lema404}
\begin{itemize}
    \item [(i)]If $B\neq 0,$ then 
\[|B-B'|\leq  CB^{-1}\left(A^2+B^2\right)^{1/2}\varepsilon+
CB^{-1}\varepsilon^2.\]
\item [(ii)]If $B\neq 0,$ then 
\[|B-B'|\leq C |A|^{1/2}\varepsilon^{1/2}+C\varepsilon.\]
\end{itemize}
 
\end{lemma}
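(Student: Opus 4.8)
The plan is to use the conservation of enstrophy $J$ to compare the amplitude parameters $B$ and $B'$, in the same spirit as the treatment of $A$ and $A'$ in Lemma~\ref{lema403}. Write $c_0:=\|J_0(\mathsf jr)\|_{L^2(D)}^2$ and $c_1:=\|J_1(\mathsf jr)\cos\theta\|_{L^2(D)}^2=\|J_1(\mathsf jr)\sin\theta\|_{L^2(D)}^2$, both positive constants. Since $J_0(\mathsf jr)$, $J_1(\mathsf jr)\cos\theta$, $J_1(\mathsf jr)\sin\theta$ are mutually orthogonal and $\cos^2\alpha+\sin^2\alpha=1$, a direct computation gives the closed forms
\[
J(\tilde\omega_t)=c_0A^2+c_1B^2,\qquad J(\hat\omega_t)=c_0{A'}^2+c_1{B'}^2.
\]
Here $A,B$ are the \emph{fixed} parameters defining the orbit $\mathbf O$ (only $\alpha$ varies along $\mathbf O$), so $J(\tilde\omega_t)$ is independent of $t$. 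The whole point is thus to show that $J(\hat\omega_t)$ stays within $O\big(\varepsilon(A^2+B^2)^{1/2}+\varepsilon^2\big)$ of this fixed value, which then pins ${B'}^2$ near $B^2$.

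The first key step is to connect $J(\hat\omega_t)$ to $J(\omega_0)$. Because $\hat\omega_t=\mathcal P_1\omega_t$ is the orthogonal projection onto $\mathbf V=\mathbf E_1$, the Pythagorean identity and the conservation of enstrophy yield
\[
J(\hat\omega_t)=J(\omega_t)-\|\omega_t-\hat\omega_t\|_{L^2(D)}^2=J(\omega_0)-\mathrm{dist}_2(\omega_t,\mathbf V)^2,
\]
and Proposition~\ref{prop31} together with $\mathrm{dist}_2(\omega_0,\mathbf V)\le\|\omega_0-\tilde\omega_0\|_{L^2(D)}\le\varepsilon$ bounds the last term by $C\varepsilon^2$. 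Next I connect $J(\omega_0)$ to $J(\tilde\omega_0)$: since $\big|\,\|\omega_0\|_{L^2(D)}-\|\tilde\omega_0\|_{L^2(D)}\,\big|\le\varepsilon$ and $\|\tilde\omega_0\|_{L^2(D)}=J(\tilde\omega_0)^{1/2}\le C(A^2+B^2)^{1/2}$, expanding the square gives $|J(\omega_0)-J(\tilde\omega_0)|\le C\varepsilon(A^2+B^2)^{1/2}+C\varepsilon^2$. Finally $J(\tilde\omega_0)=c_0A^2+c_1B^2=J(\tilde\omega_t)$ by rotational invariance. Chaining these three facts produces
\[
\big|c_0{A'}^2+c_1{B'}^2-c_0A^2-c_1B^2\big|\le C\varepsilon(A^2+B^2)^{1/2}+C\varepsilon^2.
\]

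To isolate the $B$-terms I move the $A$-terms to the right and invoke Lemma~\ref{lema403}. Writing $c_0|A^2-{A'}^2|=c_0|A-A'|\,|A+A'|$, I combine $|A-A'|\le C\varepsilon$ from Lemma~\ref{lema403} with the a priori bound $|A'|\le C(A^2+B^2)^{1/2}+C\varepsilon$, which follows from $c_0{A'}^2\le J(\hat\omega_t)=\|\hat\omega_t\|_{L^2(D)}^2\le\|\omega_0\|_{L^2(D)}^2$; this shows $c_0|A^2-{A'}^2|\le C\varepsilon(A^2+B^2)^{1/2}+C\varepsilon^2$, whence
\[
|B^2-{B'}^2|\le C\varepsilon(A^2+B^2)^{1/2}+C\varepsilon^2.
\]
The last step is the passage from $|B^2-{B'}^2|$ to $|B-B'|$, which is exactly where the dichotomy appears. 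When $B\ne0$ one has $B+B'\ge B>0$, so dividing gives $|B-B'|\le B^{-1}|B^2-{B'}^2|$, yielding case~(i). When $B=0$ the factor $B+B'=B'$ degenerates and division is unavailable; instead $|B-B'|=B'=|B^2-{B'}^2|^{1/2}$, and $\sqrt{a+b}\le\sqrt a+\sqrt b$ applied to $|{B'}^2|\le C|A|\varepsilon+C\varepsilon^2$ (using $(A^2+B^2)^{1/2}=|A|$ here) gives the square-root rate of case~(ii). This loss of a square root when $B+B'$ can be arbitrarily small is the main obstacle, and it is the quantitative source of the claimed weaker stability of the purely radial flow.
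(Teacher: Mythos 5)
Your proposal is correct and follows essentially the same route as the paper: conservation and rotational invariance of the enstrophy $J$, the explicit orthogonal expansion $J=c_0A^2+c_1B^2$ on $\mathbf V$, Lemma \ref{lema403} to absorb the $A$-terms, and then division by $B+B'$ when $B\neq 0$ versus a square root when $B=0$. The only (harmless) variation is that you bound $|J(\omega_t)-J(\hat\omega_t)|$ exactly via the Pythagorean identity $J(\hat\omega_t)=J(\omega_t)-\|\omega_t-\hat\omega_t\|_{L^2(D)}^2$, giving $C\varepsilon^2$, where the paper factors the difference of squares and gets the slightly weaker but equally sufficient bound $C(A^2+B^2)^{1/2}\varepsilon+C\varepsilon^2$.
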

\begin{proof}
First, we claim that 
\begin{equation}\label{claim2}
|J(\tilde\omega_t)-J(\hat\omega_t)|\leq C \left(A^2+B^2\right)^{1/2}\varepsilon+C\varepsilon^2.
\end{equation}
The proof is similar to that of \eqref{claim1}.
Since $J$ is also conserved and rotationally invariant, we have
\[
J(\omega_t)=J(\omega_0),\quad J(\tilde\omega_t)=J(\tilde\omega_0).
\]
Then, as in \eqref{claim11}, 
\begin{equation}\label{claim21}
|J(\tilde\omega_t)-J(\hat\omega_t)|\leq |J(\tilde\omega_0)-J(\omega_0)|+ |J(\omega_t)-J(\hat\omega_t)|.
\end{equation}
For the first term, 
\begin{equation}\label{claim22}
\begin{split}
|J(\tilde\omega_0)-J(\omega_0)|&\leq  \|\tilde\omega_0 + \omega_0\|_{L^2(D)} \|\tilde\omega_0-\omega_0\|_{L^2(D)}\\
& \leq \left(\|\tilde\omega_0 -\omega_0\|_{L^2(D)}+2\|\tilde\omega_0 \|_{L^2(D)}\right) \|\tilde\omega_0-\omega_0\|_{L^2(D)}\\
&\leq C \left(A^2+B^2\right)^{1/2}\varepsilon+C\varepsilon^2,
\end{split}
\end{equation}
where we used 
\[\|\tilde\omega_0\|_{L^2(D)}\leq C\left(A^2+B^2\right)^{1/2}.\]
For the second term,   
\begin{equation}\label{claim23}
\begin{split}
|J(\omega_t)-J(\hat\omega_t)|&\leq  \|\omega_t + \hat\omega_t\|_{L^2(D)} \| \omega_t-\hat\omega_t\|_{L^2(D)}\\
&\leq  2\|\omega_t\|_{L^2(D)}\|\omega_t - \hat\omega_t\|_{L^2(D)}\\
&= 2 \|\omega_0\|_{L^2(D)}\|\omega_t - \hat\omega_t\|_{L^2(D)}\\
&\leq 2 \left(\|\omega_0-\tilde\omega_0\|_{L^2(D)}+\|\tilde\omega_0\|_{L^2(D)}\right)\|\omega_t - \hat\omega_t\|_{L^2(D)}\\
&\leq C \left(A^2+B^2\right)^{1/2}\varepsilon+C\varepsilon^2.
\end{split}
\end{equation}
From \eqref{claim21}-\eqref{claim23}, we get \eqref{claim2}.

On the other hand,  
\begin{equation}\label{calc21}
\begin{split}
	J(\tilde\omega_t)&=\int_D(AJ_0(\mathsf jr)+BJ_1(\mathsf jr)\cos\left(\theta+\alpha\right))^2 \dd{\mathbf x}\\
	&= A^2\|J_0(\mathsf jr)\|^2_{L^2(D)}+B^2\|J_1(\mathsf jr)\cos\theta\|_{L^2(D)}^2.
%	&=\int_0^1\int_0^{2\pi }(AJ_0(\mathsf jr)+BJ_1(\mathsf jr)\cos\left(\theta+\alpha\right))^2r \dd {\theta } \dd{r}\\
%	&=2\pi A^2\int_0^1rJ_0^2(\mathsf jr)\dd r+B^2\pi \int_0^1rJ_1^2(\mathsf jr) \dd{r}\\
%	&=\left(A^2+\frac{1}{2}B^2\right)\pi J_0^2(\mathsf j),
\end{split}
\end{equation}
%where we used the facts that
%\[
%\int_0^1rJ_0^2(\mathsf %jr)\dd{r}=\frac{1}{2}J_0^2(\mathsf j),\quad 
%	\int_0^1rJ_1^2(\mathsf jr) %\dd{r}=\frac{1}{2}J_0^2(\mathsf j).
%\]
Similarly, we have that
\begin{equation}\label{calc22}
	J(\hat\omega_t)=A'^2\|J_0(\mathsf jr)\|^2_{L^2(D)}+B'^2\|J_1(\mathsf jr)\cos\theta\|_{L^2(D)}^2.
\end{equation}
From \eqref{calc21} and \eqref{calc22},  we get
 \begin{equation}\label{itih}
 J(\tilde\omega_t)-J(\hat\omega_t)
= \left(A^2-A'^2\right) {\|J_0(\mathsf jr)\|^2_{L^2(D)}}+\left(B^2-B'^2\right){\|J_1(\mathsf jr)\cos\theta\|_{L^2(D)}^2}. 
\end{equation}

Combining \eqref{claim2}, \eqref{itih}, and Lemma \ref{lema403}, we have that
\begin{equation}\label{eesm1}
\begin{split}
\left|B^2-B'^2\right|&\leq C \left(A^2+B^2\right)^{1/2}\varepsilon+C\varepsilon^2+C|A-A'||A+A'|\\
&\leq C \left(A^2+B^2\right)^{1/2}\varepsilon+C\varepsilon^2+C\varepsilon(|A-A'|+2|A|)\\
&\leq C \left(A^2+B^2\right)^{1/2}\varepsilon+C\varepsilon^2.
\end{split}
\end{equation}
In view of \eqref{eesm1}, we distinguish two cases:
\begin{itemize}
    \item [(1)] If $B\neq 0,$ then
    \[|B-B'|\leq \frac{C \left(A^2+B^2\right)^{1/2}\varepsilon+C\varepsilon^2}{B+B'}\leq \frac{C \left(A^2+B^2\right)^{1/2}\varepsilon+C\varepsilon^2}{B}.\]
    Note that we have assumed $B,B'\geq 0.$
    \item[(2)]If $B=0,$ then
    \[|B'|\leq C\left(|A|\varepsilon+C\varepsilon^2\right)^{1/2}\leq C |A|^{1/2}\varepsilon^{1/2}+C\varepsilon.\]
\end{itemize}
The proof is complete.
\end{proof}

\begin{remark}\label{rk45}
One may ask whether, in the case $B=0$, other Casimirs could be used to achieve an $O(\varepsilon)$ estimate for small $\varepsilon$. The answer is, in fact, negative.
In fact, for any   $f\in C^1(\mathbb R)$,
define
\[F(A,B):=\int_Df\left(AJ_0(\mathsf jr)+BJ_1(\mathsf jr)\cos\theta\right)\dd{\mathbf x}.\]
If we want to use $F$ to obtain an $O(\varepsilon)$ estimate by a similar argument, it is necessary to require $\partial_BF\neq 0.$
However, by a straightforward calculation,
\[\partial_BF|_{(A,0)}=\int_Df'\left(AJ_0(\mathsf jr) \right)J_1(\mathsf jr)\cos\theta \dd{\mathbf x}=0\]
for any $A\in\mathbb R.$
This indicates that radial flows (i.e., $B=0$) may indeed exhibit weaker stability.
\end{remark}

\begin{proof}[Proof of Theorem \ref{thmmain} for mean-zero perturbations]
It follows from \eqref{dtil} and Lemmas \ref{lema401}-\ref{lema404}.
 
\end{proof}

\subsection{General perturbations}
Fix a smooth solution $\omega_t$ to the Euler equation \eqref{vor} such that
\begin{equation}\label{lessvar}
\mathrm{dist}_2(\omega_0,\mathbf O)\leq \varepsilon.
\end{equation}
Define
\[w_t(\mathbf x):=\omega_t\left(\mathsf R_{\Omega t}\mathbf x\right)+2\Omega,\quad\Omega:=-\frac{1}{2|D|}\int_D\omega_0 \dd{\mathbf x},\]
where  $\mathsf R_{\Omega t}$ represents a clockwise rotation by an angle $\Omega t$.
Then, according to \cite[Lemma 6.1]{W25}, $w_t$ is also a solution to the Euler equation \eqref{vor}. Moreover,  it is clear that $w_t$ is mean-zero for any $t$.

\begin{lemma}\label{lema405}
$|\Omega|\leq C\varepsilon$.
\end{lemma}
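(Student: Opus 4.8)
The plan is to exploit the single structural fact that every element of the orbit $\mathbf{O}$ has vanishing integral over $D$. Indeed, by Lemma \ref{lema22} we have $\mathbf{V}=\mathbf{E}_1\subset\mathring{L}^2(D)$, so in particular $\int_D\bar\omega\,\dd{\mathbf{x}}=0$ for every $\bar\omega\in\mathbf{O}\subset\mathbf{V}$. (Concretely, $J_0(\mathsf{j}r)$ integrates to zero because $\mathsf{j}=j_{1,1}$ is a zero of $J_1$, combined with the identity $(sJ_1(s))'=sJ_0(s)$ already used in Lemma \ref{lema403}, while $J_1(\mathsf{j}r)\cos\theta$ and $J_1(\mathsf{j}r)\sin\theta$ integrate to zero by angular averaging.) This observation is what drives the whole estimate, since $\Omega$ is, up to a constant, precisely the deviation of $\omega_0$ from being mean-zero.

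Given the hypothesis \eqref{lessvar}, the compactness of $\mathbf{O}$ in $L^2(D)$ (already used to produce a nearest point) furnishes some $\bar\omega\in\mathbf{O}$ with $\|\omega_0-\bar\omega\|_{L^2(D)}\leq\varepsilon$. Since $\bar\omega$ is mean-zero, I would rewrite the mean of $\omega_0$ as
\[
\int_D\omega_0\,\dd{\mathbf{x}}=\int_D(\omega_0-\bar\omega)\,\dd{\mathbf{x}},
\]
and then apply the Cauchy--Schwarz (H\"older) inequality exactly as in \eqref{claim12}:
\[
\left|\int_D\omega_0\,\dd{\mathbf{x}}\right|\leq|D|^{1/2}\|\omega_0-\bar\omega\|_{L^2(D)}\leq|D|^{1/2}\varepsilon.
\]
Dividing by $2|D|$ then gives $|\Omega|\leq\tfrac{1}{2}|D|^{-1/2}\varepsilon=C\varepsilon$, which is exactly the claim.

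There is essentially no genuine obstacle here: the estimate is immediate once one recognizes that $\mathbf{O}$ lies in the mean-zero subspace, so that the quantity $\Omega$ is controlled by the $L^2$-distance to $\mathbf{O}$. The only point deserving a word of justification is the mean-zero property of $\mathbf{V}$, and this is already encoded in the identification $\mathbf{V}=\mathbf{E}_1$ from Lemma \ref{lema22}. This lemma is precisely the bridge that will allow the general-perturbation case to be reduced, via the shifted solution $w_t$, back to the mean-zero case treated in the previous subsection.
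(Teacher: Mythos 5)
Your proof is correct and follows essentially the same route as the paper: pick a nearest point $v\in\mathbf{O}$ to $\omega_0$, use that every element of $\mathbf{O}$ is mean-zero to rewrite $\int_D\omega_0\,\dd{\mathbf{x}}$ as $\int_D(\omega_0-v)\,\dd{\mathbf{x}}$, and conclude by the Cauchy--Schwarz inequality. The only difference is cosmetic: you explicitly justify the mean-zero property of $\mathbf{V}$ (via $\mathbf{V}=\mathbf{E}_1\subset\mathring{L}^2(D)$, or by the Bessel identity), whereas the paper uses it silently.
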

\begin{proof}
Choose $v\in\mathbf O$ such that
\[\|\omega_0-v\|_{L^2(D)}=\mathrm{dist}_2(\omega_0,\mathbf O).\]
Then, by \eqref{lessvar}, 
\[|\Omega|=\frac{1}{2|D|}\left| \int_D\omega_0 \dd{\mathbf x}\right|= \frac{1}{2|D|}\left|\int_D\omega_0-v \dd{\mathbf x}\right|\leq C\|\omega_0-v\|_{L^2(D)}\leq C\varepsilon.\]
\end{proof}

\begin{lemma}\label{lema406}
It holds that
\[\mathrm{dist}_2(w_t,\mathbf O)-C\varepsilon\leq \mathrm{dist}_2(\omega_t,\mathbf O)\leq \mathrm{dist}_2(w_t,\mathbf O)+C\varepsilon.\]
\end{lemma}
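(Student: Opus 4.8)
The plan is to split the map $\omega_t\mapsto w_t$ into its two constituent operations — the rigid rotation of the spatial argument and the addition of the constant $2\Omega$ — and to show that neither changes the $L^2$-distance to $\mathbf O$ by more than $C\varepsilon$. I would first regard $\mathsf R_{\Omega t}$ as an operator on functions via $(\mathsf R_{\Omega t}v)(\mathbf x):=v(\mathsf R_{\Omega t}\mathbf x)$, so that
\[w_t=\mathsf R_{\Omega t}\omega_t+2\Omega.\]
Since $D$ is a disk, each rotation is a measure-preserving bijection of $D$ onto itself, hence an isometry of $L^2(D)$, i.e.\ $\|\mathsf R_{\Omega t}v\|_{L^2(D)}=\|v\|_{L^2(D)}$ for all $v$. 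Moreover, the orbit $\mathbf O$ in \eqref{defor} is invariant under this action: rotating the argument shifts the angular variable $\theta$ by the fixed angle $\Omega t$, which merely changes the phase $\alpha$ in the $J_1$-term to another real number and leaves the radial term $AJ_0(\mathsf jr)$ untouched, so $\mathsf R_{\Omega t}\mathbf O=\mathbf O$.

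The key consequence is that the rotation step leaves the distance to $\mathbf O$ invariant: re-indexing the infimum by $\mathsf R_{\Omega t}\mathbf O=\mathbf O$ and then using the isometry property,
\[\mathrm{dist}_2(\mathsf R_{\Omega t}\omega_t,\mathbf O)=\inf_{v\in\mathbf O}\|\mathsf R_{\Omega t}\omega_t-\mathsf R_{\Omega t}v\|_{L^2(D)}=\mathrm{dist}_2(\omega_t,\mathbf O).\]
For the additive constant I would invoke the elementary fact that $v\mapsto\mathrm{dist}_2(v,\mathbf O)$ is $1$-Lipschitz in the $L^2$-norm (immediate from the triangle inequality inside the infimum). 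Hence
\[\left|\mathrm{dist}_2(w_t,\mathbf O)-\mathrm{dist}_2(\mathsf R_{\Omega t}\omega_t,\mathbf O)\right|\leq\|w_t-\mathsf R_{\Omega t}\omega_t\|_{L^2(D)}=\|2\Omega\|_{L^2(D)}=2|\Omega|\,|D|^{1/2}\leq C\varepsilon,\]
where the last bound is Lemma \ref{lema405}. Combining the two displays gives $\left|\mathrm{dist}_2(w_t,\mathbf O)-\mathrm{dist}_2(\omega_t,\mathbf O)\right|\leq C\varepsilon$, which is exactly the asserted two-sided estimate.

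I do not expect a serious obstacle here; the argument is essentially the observation that rotations act as $L^2(D)$-isometries preserving $\mathbf O$, together with the Lipschitz property of the distance-to-a-set functional. The only point genuinely requiring care is the exact invariance $\mathsf R_{\Omega t}\mathbf O=\mathbf O$, which hinges on the specific structure of $\mathbf O$ as a radial profile plus a single rotating angular mode; for a more general orbit this step would need to be re-examined.
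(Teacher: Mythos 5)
Your proof is correct and follows essentially the same route as the paper's: decompose $w_t=\mathsf R_{\Omega t}\omega_t+2\Omega$, use that rotations are $L^2$-isometries preserving $\mathbf O$ so the distance is unchanged, and absorb the constant $2\Omega$ via the triangle (Lipschitz) inequality together with Lemma \ref{lema405}. The only difference is presentational: the paper proves one inequality and notes the other is analogous, whereas you obtain both at once through the $1$-Lipschitz property of $v\mapsto\mathrm{dist}_2(v,\mathbf O)$, and you spell out the invariance $\mathsf R_{\Omega t}\mathbf O=\mathbf O$ that the paper leaves implicit.
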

\begin{proof}
From the definition of $w_t$ and Lemma \ref{lema405}, we have that
\[
\mathrm{dist}_2(w_t,\mathbf O) 
\leq \mathrm{dist}_2\left(\omega_t\circ \mathsf R_{\Omega t},\mathbf O\right)+\|2\Omega\|_{L^2(D)}
=\mathrm{dist}_2\left(\omega_t,\mathbf O\right)+\|2\Omega\|_{L^2(D)}
\leq \mathrm{dist}_2\left(\omega_t,\mathbf O\right)+C\varepsilon.
\]
The other inequality can be proved analogously.
\end{proof}

\begin{proof}[Proof of Theorem \ref{thmmain} for general perturbations]
Since  $w_t$ is a mean-zero solution to the Euler equation, we have, according to what we have proved in the last subsection, that 
\[\mathrm{dist}_2(w_0,\mathbf O)\leq \varepsilon\ \Longrightarrow\  \mathrm{dist}_2(w_t,\mathbf O)\leq
\begin{cases} CB^{-1}\left(A^2+B^2\right)^{1/2}\varepsilon+
CB^{-1}\varepsilon^2,\ \  &\mbox{if }B\neq 0,\\
C |A|^{1/2}\varepsilon^{1/2}+C\varepsilon,&\mbox{if }B= 0
\end{cases} \] 
for any $t\in\mathbb R.$ We can then conclude the proof by repeatedly applying Lemma \ref{lema406}.
\end{proof}

\bigskip
 \noindent{\bf Acknowledgements:}  G. Wang was supported by NNSF of China (Grant No. 12471101) and  Fundamental Research Funds for the Central Universities (Grant No. DUT19RC(3)075).

\bigskip
\noindent{\bf  Data Availability} Data sharing not applicable to this article as no datasets were generated or analysed during the current study.

\bigskip
\noindent{\bf Conflict of interest}    The authors declare that they have no conflict of interest to this work.

\phantom{s}
 \thispagestyle{empty}

\end{document}